\documentclass[12pt]{amsart}
\usepackage{geometry} 

\usepackage{amssymb}
\usepackage{amsthm}
\usepackage[usenames]{color}

\def\rouge{ \textcolor{red} }


 \newtheorem{theorem}{Theorem}[section]
\newtheorem{lemma}[theorem]{Lemma}
\newtheorem{proposition}[theorem]{Proposition}

\theoremstyle{definition}
\newtheorem{example}[theorem]{Example}
\newtheorem{remark}[theorem]{Remark}

\newtheorem{definition}[theorem]{Definition}

\numberwithin{equation}{section}

\geometry{letterpaper} 



\def\h{{\bf H}}
\def\u#1{{\bf u}_{#1}}
\def\a{a}
\def\b{b}
\def\c{c}
\def\d{d}
\def\S{{\mathcal S}}
\def\t#1{{\bf t}_{#1}}


\title [Schubert and  $k$-Schur]{Schubert Polynomials and $k$-Schur functions} 
\author{Carolina Benedetti and Nantel Bergeron}
\address[Nantel Bergeron]
 {Department of Mathematics and Statistics\\York University\\
 Toronto, Ontario M3J 1P3\\
 CANADA}
 \email{bergeron@mathstat.yorku.ca}
 \urladdr{http://www.math.yorku.ca/bergeron}

\address[Carolina Benedetti]
 {Department of Mathematics and Statistics\\York University\\
 Toronto, Ontario M3J 1P3\\
 CANADA}
 \email{carobene@mathstat.yorku.ca}

\thanks{N. Bergeron is supported in part by NSERC}

\begin{document}

\begin{abstract}The main purpose of this paper is to show that the multiplication of a Schubert polynomial of finite type $A$ by a Schur function, which we refer to as Schubert vs. Schur problem, can be understood from the multiplication in the space of dual $k$-Schur functions. Using earlier work by the second author, we encode both problems by means of quasisymmetric functions. On the Schubert vs. Schur side, we study the poset given by the Bergeron-Sottile's $r$-Bruhat order, along with certain operators associated to this order. On the other side, we connect this poset with a graph on dual $k$-Schur functions given by studying the affine grassmannian order of  Lam-Lapointe-Morse-Shimozono. Also, we define operators associated to the graph on dual $k$-Schur functions which are analogous to the ones given for the Schubert vs. Schur problem.
\end{abstract}

\maketitle

\section{Introduction}\label{intro}
A fundamental problem in algebraic combinatorics is to find combinatorial rules for certain properties of a given combinatorial Hopf algebra. The problem of providing a combinatorial rule for the structure constants of a particular basis is an instance of this situation. The classical example is the Littlewood-Richardson rule which describes the multiplication and comultiplication of Schur functions within the space of symmetric functions. These constants are known to be positive from geometry since they describe intersections of grassmannian varieties, or from representation theory where they count the multiplicity of irreducible characters in certain induced representations.
Although this shows that, in theory, these constants are non-negative integers, the theory is not enough to specify whether they are non-zero, or how big they are. However, theLittlewood-Richarson rule does ~\cite{macdonald, knutsontao}  and
it describes each constant as the cardinality of a constructed set of objects or points.

Providing a rule for this kind of problems is in general very hard and many such problems are still unsolved. In particular, this paper will consider two of these problems which are closely related: the multiplication of Schubert polynomials, and the multiplication and comultiplication of $k$-Schur functions. Both contain as a particular case the so-called Gromov-Witten invariants. Let us give some background about each one of these problems.

Schubert polynomials are known to multiply positively since their structure constants enumerate flags in suitable triple intersections of Schubert varieties. However, there is no positive combinatorial rule to construct these constants in general. Nevertheless, since Schur polynomials correspond to grassmannian varieties which are a special class of flag varieties, we have that the Littlewood-Richardson rule is a special case of this particular problem. Even if we consider a slightly  larger class of Schubert polynomials, namely, multiplication of a Schubert polynomial by a Schur function, we find that for several years there was no solution for finding a positive rule for these structure constants. Fortunately, in ~\cite{bsottileschub} new identities were deduced, more tools were developed and the use of techniques along the way of ~\cite{bsottilemonoid,bsottilehopf,bsottileskew,bmws, A_JAMS} gave as a result a combinatorial rule for this problem~\cite{assafbsottile}, which we will refer later as Schubert vs. Schur. Also in~\cite{assafbsottile}, using the work of~\cite{BMP}, we deduce, independently of ~\cite{Buch}, a combinatorial proof that the Gromov-Witten invariants are positive.

Let us turn our attention now to $k$-Schur functions and their duals. These functions were first defined in ~\cite{LLM03} in order to study Macdonald polynomials but they soon turned out to be much more interesting due to their connection to different mathematical constructions. There are at least six different definitions of $k$-Schur functions and it is conjectural that they are equivalent. In~\cite{Lam}, one definition is shown to be related to the homology of the affine grassmannian of the affine coxeter group $\tilde{A}_{k+1}$. More precisely, the $k$-Schur functions are shown to be the Schubert polynomials for the affine grassmannian and, as such, the structure constants of their multiplication must be positive integers.
The space of $k$-Schur functions span a graded Hopf algebra, and its graded dual describes the cohomology of the affine grassmannian.
Thus, the comultiplication structure is also given by positive integer constants.
Also, the structure constants of $k$-Schur functions include, as a special case, the structure of the small quantum cohomology and in particular, as mentioned above, the Gromov-Witten invariants~\cite{lapointemorse}. 

In a series of two papers we plan to give a positive rule (along the lines of ~\cite{assafbsottile}) for the multiplication of dual $k$-Schur with a Schur function and relate this to the Schubert vs Schur problem. This is done by an in-depth study of the affine strong Bruhat graph. In order to achieve this we need to adapt the tools we have in~\cite{bsottilemonoid,bsottilehopf,bsottileskew,bmws,assafbsottile} and create new ones. To give an outline of how this will be done, we set up some notation. Partitions will be denoted by $\lambda,\mu,\nu$ and $u,v,w$ will denote affine grassmannian permutations. The general plan is as follows.

\noindent{\bf (I)} We study the strong Bruhat graph restricted to affine grassmannian permutations (see~\cite{LLMS}). Given two such permutations $u,v$ let $K_{[u,v]}$ be the quasisymmetric function associated to them, which is constructed following techniques in~\cite{bmws}. 
The coefficient $d_{u,\lambda}^v$ of a Schur  function $S_\lambda$ in $K_{[u,v]}$ is the same as the coefficient of the dual $k$-Schur ${S}_v^{*(k)}$ in the product $S_\lambda{S}_u^{*(k)}$. In this way we recover certain structure constants of the multiplication of dual $k$-Schur functions since when $\lambda\subseteq (c^r)$ and $c+r=k+1$ we have that $S_\lambda=S_w^{(k)*}$ for some $w$ affine grassmannian. 

\noindent{\bf (II)} We prove combinatorially that the expansion of $K_{[u,v]}$ in terms of Schur functions is positive. This is done in analogy with ~\cite{bsottilemonoid, A_JAMS, assafbsottile}.

In this paper we will cover part {\bf (I)} together with some related work and an explicit embedding of the Schubert vs. Schur problem into the dual $k$-Schur problem. This is done by inclusion of the chains of the grassmannian-Bruhat order into the affine strong Bruhat graph. Then a connected component from the first graph is sent to a connected component in the second graph. This implies that the dual $k$-Schur problem is at least as complex as the Schubert vs. Schur problem. From our point of view, this is a very surprising fact. On the dual $k$-Schur function side, we are multiplying affine grassmannians. In the non-affine case, this should correspond to multiplying Schur functions. Here we show that an arbitrary Schubert multiplied by a Schur embeds in the affine case. 

Part ${\bf (II)}$ will appear in~\cite{benedettib} after~\cite{A_JAMS, assafbsottile} is published. 

One final remark before we get started. The approach in~\cite{bmws} cannot be used directly on the affine weak Bruhat order  to understand the multiplication of $k$-Schur functions. It was erroneously suggested in example~6.9 of~\cite{bmws} that $K_{[u,v]_w}$ defined on an interval $[u,w]_w$ of  the affine weak order expands positively in terms of fundamental quasisymmetric functions using descent. The problem here is that the descent of a chain is not well defined. Equation~(6.1) of~\cite{bmws} is valid only if the descent set of a chain is a unique coarsening of its possible decomposition into increasing components. This is not the case in example~6.9 and
going back to the original definition of $K_{[u,v]_w}$ is necessary. The (symmetric) quasisymmetric function $K_{[u,v]_w}$ obtained this way has been rediscovered by Postnikov 
in~\cite{postnikov}. They are not positive in general, but when restricted to the coefficient of a Schur function $S_\lambda$ where $\lambda$ is contained in the fixed rectangle $R$, then the constant is positive and equals to the Gromov-Witten invariants. 
However, $K_{[u,v]_w}$ is not positive when expanded in terms of fundamental quasisymmetric functions and the techniques of~\cite{A_JAMS,assafbsottile} cannot be adapted. Nevertheless, here we show that the affine strong Bruhat graph behaves well.

The paper is organized as follows. In Sections~\ref{sc:Schubert} and ~\ref{sc:kSchur} we recall some background about Schubert polynomials and $k$-Schur functions, respectively. In Section~\ref{sc:strongbruhat} we study the affine strong  Bruhat graph and introduce the main relations satisfied by saturated chains in this order. Also,we introduce the quasi-symmetric function $K_{[u,v]}$. Finally, Section~\ref{sc:SSimbedding} is dedicated to the inclusion of the chains of the grassmannian-Bruhat order.

\section{Schubert Polynomials}\label{sc:Schubert}

One of our main goals is to show that the Schubert vs. Schur problem is embedded in the problem of
multiplying dual $k$-Schur functions, as explained in the introduction. We thus recall a few results from~\cite{bsottilemonoid,bsottilehopf,bsottileskew,bmws}.

Let $u\in {\mathcal S_{\infty}}:=\bigcup_{n\ge0} {\mathcal S}_n$ be an infinite permutation where all but a finite number of positive integers are fixed. Non-affine Schubert polynomials $\mathfrak S_u$ are indexed by such permutations ~\cite{LS82a,Macdonald91}.
These polynomials form a homogenuous basis of the polynomial ring ${\mathbb Z}[x_1,x_2,\ldots]$ in countably many variables. The coefficients $c_{u,v}^w$ in
\begin{equation}\label{eq:genschub}
  {\mathfrak S}_u {\mathfrak S}_v = \sum_{v} c_{u,v}^w {\mathfrak S}_w ,
\end{equation}
are known to be positive. 

\subsection{$r$-Bruhat order and Pieri operators.}\label{sec:rBruhat}
As shown in example~6.2 of~\cite{bmws} (see also~\cite{bsottileskew}), we can encode some of the coefficients in (\ref{eq:genschub}) with a quasisymmetric function as follows.
Let
$\ell(w)$ be the length of a permutation $w\in {\mathcal S}_\infty$.
We define the $r$-Bruhat order $<_r$ by its covers.
Given permutations $u,w\in{\mathcal S}_\infty$, we say that
$u\lessdot_r w$ if $\ell(u)+1=\ell(w)$ and
$u^{-1}w=(i,j)$, where $(i,j)$ is a reflection with $i\leq r<j$.
When $u\lessdot_r w$, we write $wu^{-1}=(a,b)$ with $a<b$ and label the
cover $u\lessdot_r w$ in the $r$-Bruhat order with the integer $b$.

We enumerate chains  in the $r$-Bruhat order according to the descents in their sequence 
of labels of the edges. More precisely,
we use the {\sl descent Pieri operator} 
\begin{equation}\label{eq:PieriOpSchubert}
   x.\h_k\ :=\ \sum_\omega {\rm end}(\omega),
\end{equation}
where the sum is over all chains $\omega$ of length $k$  in the $r$-Bruhat order starting at $x\in{\mathcal S}_\infty$,
 $$
  \omega \ :\  x  \ \stackrel{b_1}{\longrightarrow}\
               x_1\ \stackrel{b_2}{\longrightarrow}\  \cdots
                    \stackrel{b_k}{\longrightarrow}\  x_k\ =:\ {\rm end}(\omega)\,,
 $$
with no descents, that is $b_1\leq b_2\leq\cdots\leq b_k$.  
\noindent Let $\langle{\cdot,\,\cdot}\rangle$ be the bilinear form on ${\mathbb Z} {\mathcal S}_\infty$ induced by the
Kronecker delta function on the elements of ${\mathcal S}_\infty$. Given $u\le_r w$, let $n=\ell(w)-\ell(u)$ be the rank of the interval $[u,w]_r$ and let
\begin{equation}\label{eq:KschubM}
  K_{[u,w]_r}\ =\ \sum_{ \alpha\models n} \langle u.\h_{\alpha_1}...\h_{\alpha_k},w\rangle M_\alpha
\end{equation}
summing over all compositions $\alpha=(\alpha_1,\ldots,\alpha_k)$ of $n$, where
  $$M_\alpha = \sum_{i_1<i_2<\cdots < i_k} x_{i_1}^{\alpha_1} x_{i_2}^{\alpha_2} \cdots x_{i_k}^{\alpha_k}$$
 is the monomial quasisymmetric function indexed by $\alpha$ (see~\cite{aguiarbsottile,bmws}). 
 
\noindent Now, given a saturated chain $\omega$ in the interval $[u,w]_r$ with labels $b_1,b_2,\ldots,b_n$, we let $D(\omega)=(d_1,d_2,\ldots,d_s)$ denote the unique composition of $n$ such that $b_i>b_{i+1}$ exactly in position $i\in\{d_1,d_1+d_2,\ldots,d_1+d_2+\cdots+d_{s-1}\}$. The chain $\omega$ contributes to the coefficient of $M_\alpha$ if and only if $\alpha \le D(\omega)$ under refinement. We thus have
\begin{equation}\label{eq:KschubF}
  K_{[u,w]_r}\ =\ \sum_{\omega\in [u,w]_r} F_{D(\omega)}.
\end{equation}
where $F_\beta$ denotes the fundamental quasisymmetric function for a composition $\beta$.

The descent Pieri operators on this labelled poset are symmetric as
$\h_m$  models the action of the Schur polynomial $h_m(x_1,\ldots ,x_r)$ on
the basis of Schubert classes (indexed by ${\S}_\infty$) in the
cohomology of the flag manifold $SL(n,\mathbb{C})/B$. The quasisymetric function $K_{[u,w]_r}$ 
is then a symmetric function and we can expand it in terms of Schur functions $S_\lambda$.

\begin{proposition}[\cite{bsottileskew}] \label{prop:BS}
\begin{equation}\label{eq:KschubS}
  K_{[u,w]_r}\ =\ \sum_\lambda c^w_{u,(\lambda,r)}\, S_\lambda
\end{equation}
where $c^w_{u,(\lambda,r)}$ is the coefficient of the Schubert polynomial
${\mathfrak S}_w$ in the product\break
${\mathfrak S}_u\cdot S_\lambda(x_1,\ldots,x_r)$.
\end{proposition}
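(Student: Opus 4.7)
The plan is to leverage the Sottile-type Pieri rule recalled in the sentence following~(\ref{eq:KschubF}): the descent Pieri operator $\h_k$ on the $r$-Bruhat order models multiplication by $h_k(x_1,\ldots,x_r)$ on the Schubert basis. Iterating this along a composition $\alpha=(\alpha_1,\ldots,\alpha_k)$, and using that multiplications by symmetric polynomials commute, the matrix coefficient $\langle u.\h_{\alpha_1}\cdots \h_{\alpha_k},w\rangle$ equals the coefficient of $\mathfrak S_w$ in the expansion of $\mathfrak S_u\cdot h_\alpha(x_1,\ldots,x_r)$, where $h_\alpha=h_{\alpha_1}h_{\alpha_2}\cdots h_{\alpha_k}$.

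Next, I would pass from the $h$-basis to the Schur basis via the Kostka-number identity $h_\alpha=\sum_\lambda K_{\lambda\alpha}\,S_\lambda$. Combining this with the definition of $c^w_{u,(\lambda,r)}$ as the coefficient of $\mathfrak S_w$ in $\mathfrak S_u\cdot S_\lambda(x_1,\ldots,x_r)$ yields
$$\langle u.\h_{\alpha_1}\cdots\h_{\alpha_k}, w\rangle \;=\; \sum_\lambda K_{\lambda\alpha}\,c^w_{u,(\lambda,r)}.$$

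Substituting this into the definition of $K_{[u,w]_r}$ from~(\ref{eq:KschubM}) and interchanging the order of summation gives
$$K_{[u,w]_r}\;=\;\sum_\lambda c^w_{u,(\lambda,r)}\,\Bigl(\sum_{\alpha\models n} K_{\lambda\alpha}\,M_\alpha\Bigr).$$
I would then recognize the inner sum as $S_\lambda$ itself: since $S_\lambda=\sum_\mu K_{\lambda\mu}\,m_\mu$, since $m_\mu=\sum_{\mathrm{sort}(\alpha)=\mu}M_\alpha$, and since $K_{\lambda\alpha}$ depends only on $\mathrm{sort}(\alpha)$, one has $S_\lambda=\sum_\alpha K_{\lambda\alpha}M_\alpha$ in $\mathrm{QSym}$. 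This produces the claimed Schur expansion~(\ref{eq:KschubS}).

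The genuine content lies entirely in the Pieri-rule input (Sottile's geometric/combinatorial Pieri formula), which is imported here from earlier work~\cite{bsottilemonoid,bsottilehopf,bsottileskew,bmws}; once that is granted, the rest is symmetric-function bookkeeping. The one small point that deserves a separate check is that the operators $\h_{\alpha_i}$ mutually commute as endomorphisms of $\mathbb Z\mathcal S_\infty$, but this is immediate from their identification with multiplications by the commuting symmetric polynomials $h_{\alpha_i}(x_1,\ldots,x_r)$. I do not foresee any obstacle in the conversion from $M_\alpha$ to $S_\lambda$: the identity $S_\lambda=\sum_\alpha K_{\lambda\alpha}M_\alpha$ is a standard fact about the embedding $\mathrm{Sym}\hookrightarrow\mathrm{QSym}$.
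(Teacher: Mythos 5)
Your argument is correct and is essentially the standard derivation underlying the cited result: the paper gives no proof of Proposition~\ref{prop:BS}, importing it from \cite{bsottileskew}, and the content there is precisely the Pieri-rule input you isolate (that $\h_m$ realizes multiplication by $h_m(x_1,\ldots,x_r)$ in the Schubert basis) followed by the Kostka-number bookkeeping converting $\sum_\alpha K_{\lambda\alpha}M_\alpha$ into $S_\lambda$. No gaps.
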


Geometry shows that these coefficients $c^w_{u,(\lambda,k)}$ are non-negative.
To our knowledge, the work in ~\cite{assafbsottile} is the first combinatorial proof of this fact.

\noindent Let us recall the combinatorial analysis in~\cite{bsottilemonoid} to study chains in the $r$-Bruhat order. By definition, a saturated chain in $[u,w]_r$ of the form
 $$
  \omega \ :\  u=u_0  \ \stackrel{b_1}{\longrightarrow}\
               u_1\ \stackrel{b_2}{\longrightarrow}\  \cdots
                    \stackrel{b_n}{\longrightarrow}\  u_n=w\,,
 $$
 is completely characterized by the sequence of transpositions $(a_1,b_1),(a_2,b_2), \ldots (a_n,b_n)$ where
  $(a_i,b_i)u_{i-1}=u_i$. Let $\u{\a\b}$ denote the operator on ${\mathbb Z} \S_\infty$ defined by 
\begin{equation}\label{eq:operat}
\begin{array}{rcl}
\u{\a\b} \colon\  {\mathbb Z}\S_\infty&\longrightarrow& \quad{\mathbb Z}\S_\infty,\\
u\quad&\longmapsto&\ \ \rule{0pt}{28pt} \left\{\begin{array}{ll}
(\a\,\,\,\b)u   
&\mbox{ if } u\lessdot_r (\a,\b)u,
\medskip       
\\      0& \mbox{ otherwise.}\end{array}\right.
\end{array} 
\end{equation}
We have shown in~\cite{bsottilemonoid} that these operators satisfy  the following relations:
\begin{equation} \label{eq:relschub}
\begin{array}{clrclll}
(1)&&\u{\b\c}\u{\c\d}\u{\a\c}&\equiv&\u{\b\d}\u{\a\b}\u{\b\c},\hfill&&
        \hbox{if $\a<\b<\c<\d$},\hfill\\
(2)\hfill&& \hfill \u{\a\c}\u{\c\d}\u{\b\c}&\equiv&
\u{\b\c}\u{\a\b}\u{\b\d},\hfill&&
        \hbox{if $\a<\b<\c<\d$},\hfill\\
(3)\hfill&&  \hfill \u{\a\b}\u{\c\d}&\equiv&\u{\c\d}\u{\a\b}, \hfill&&
        \hbox{if $\b<\c$ or $\a<\c<\d<\b$},\hfill\\
(4)\hfill&&  \hfill   \u{\a\c}\u{\b\d}&\equiv& \u{\b\d}\u{\a\c}\ 
\equiv \  {\bf 0},\hfill&&
        \hbox{if $\a\le \b<\c\le\d$},\hfill\\
(5)\hfill&&  \hfill  \u{\b\c}\u{\a\b}\u{\b\c}
&\equiv&\u{\a\b}\u{\b\c}\u{\a\b}\ \equiv\ {\bf 0},\hfill&&
        \hbox{if $\a<\b<\c$}.
\end{array}  
 \end{equation}
The ${\bf 0}$ in relations (4) and (5) means that no chain in any $r$-Bruhat order can contain such a sequence of transpositions.
On the other hand, relations (1), (2) and (3) are complete and transitively connect any two chains in a given interval $[u,w]_r$.
It is also important to notice that the relations are independent of $r$. This is a fact noticed in~\cite{bsottileschub}: a nonempty interval $[u,w]_r$ in the $r$-Bruhat order is isomorphic to a nonempty interval $[x,y]_{r'}$ in an $r'$-Bruhat order as long as $wu^{-1}=yx^{-1}$. This implies several identities among the structure constants. 

When we write a sequence of operators
$[\u{a_nb_n},\ldots ,\u{a_2b_2},\u{a_1b_1}]$ (or  shortly\break $\u{a_nb_n}\cdots\u{a_2b_2} \u{a_1b_1}$), if nonzero, it corresponds to a unique chain in some nonempty interval $[u,w]_r$ for some $r$ and $w^{-1}u=(a_n,b_n)\cdots(a_1,b_1)$. 
To compute the quasisymmetric function $K_{[u,w]_r}$ as in 
equation~(\ref{eq:KschubF}), it suffices to generate one chain in $[u,w]_r$ and we can obtain the other ones using relations~(1), (2) and (3) above. 

Given any $\zeta\in\S_\infty$ we produce a chain in a nonempty interval $[u,w]_r$ 
as follows. Let $up(\zeta)=\{a:\zeta^{-1}(a)<a\}$. This is a finite set and we can set $r=|up(\zeta)|$. To construct $w$, we sort the elements in $up(\zeta)=\{i_1<i_2<\cdots<i_r\}$ and its complement $up^c(\zeta)={\mathbb Z}_{>0}\setminus up(\zeta)=\{j_1<j_2<\ldots\}$. Next, we put $w=[i_1,i_2,\ldots,i_r,j_1,j_2,\ldots]\in\S_\infty$ and then we let $u=\zeta^{-1}w$. Notice that $u,w$ and $r$ constructed this way depend on $\zeta$. From~\cite{bsottileschub, bsottilemonoid}, we have that $[u,w]_r$ is non-empty and now we want to construct a chain in $[u,w]_r$. This is done recursively as follows: let
\begin{align*}
  a_1 &= u(i_1) \text{ where } i_1=\max\{i\le r:u(i)<w(i)\} \text{ \;\;\; and  }\\
  b_1 &= u(j_1) \text{ where } j_1=\min\{j> r:u(j)>u(i_1)\ge w(j)\}
\end{align*}
then $\u{a_nb_n}\cdots\u{a_2b_2} \u{a_1b_1}$ is a chain in $[u,w]_r$ for any chain $\u{a_nb_n}\cdots \u{a_2b_2}$ in $[(a_1, b_1)u,w]_r$. 

\begin{example} \label{ex:schubert}
Consider $\zeta=[3, 6, 2, 5, 4, 1,...]$ where all other values are fixed. We have that $up(\zeta)=\{3,5,6\}$ and $up^c(\zeta)=\{1,2,4,...\}$. In this case, $r=3$, $w=[3,5,6,1,2,4,...]$ and $u=[1,4,2,6,3,5,...]$. The recursive procedure above produce the chain $\u{23}\u{12}\u{45}\u{26}$ in $[u,v]_3$. We get all other chains by using the relations~(\ref{eq:relschub}):
\begin{equation} \label{eq:axampleschub}
\begin{array}{cccc}
\u{23}\u{12}\u{45}\u{26},&
\u{23}\u{12}\u{26}\u{45},&
\u{23}\u{45}\u{12}\u{26},&
\u{45}\u{23}\u{12}\u{26},\cr
\u{45}\u{13}\u{36}\u{23},&
\u{13}\u{45}\u{36}\u{23},&
\u{13}\u{36}\u{45}\u{23},&
\u{13}\u{36}\u{23}\u{45}.
\end{array}
 \end{equation}
The interval obtained in this case is
$$
 \raise -50pt\hbox{ \begin{picture}(200,120)
      \put(100,0){$\scriptstyle 142635$}
      \put(50,30){$\scriptstyle 152634$}
      \put(100,30){$\scriptstyle 143625$}
      \put(150,30){$\scriptstyle 146235$}
      \put(25,60){$\scriptstyle 153624$}
      \put(75,60){$\scriptstyle 146325$}
      \put(125,60){$\scriptstyle 246135$}
      \put(175,60){$\scriptstyle 156234$}
      \put(50,90){$\scriptstyle 156324$}
      \put(100,90){$\scriptstyle 346125$}
      \put(150,90){$\scriptstyle 256134$}
      \put(100,120){$\scriptstyle 356124$}
      \put(120,8){\line(2,1){35}} \put(110,8){\line(-2,1){35}}  \put(115,8){\line(0,1){20}}
      \put(70,18){$\scriptstyle \u{45}$}  \put(117,18){$\scriptstyle \u{23}$}  \put(150,18){$\scriptstyle \u{26}$}
      \put(170,38){\line(1,1){20}} \put(160,38){\line(-1,1){20}}  
      \put(142,40){$\scriptstyle \u{12}$}  \put(190,48){$\scriptstyle \u{45}$}  
      \put(110,38){\line(-1,1){20}} \put(105,38){\line(-3,1){60}}  
      \put(100,48){$\scriptstyle \u{36}$}  \put(75,48){$\scriptstyle \u{45}$}  
      \put(60,38){\line(-1,1){20}} \put(70,38){\line(6,1){110}}  
      \put(30,48){$\scriptstyle \u{23}$}  \put(120,52){$\scriptstyle \u{26}$}  
      \put(140,68){\line(1,1){20}} \put(135,68){\line(-1,1){20}}  
      \put(130,76){$\scriptstyle \u{23}$}  \put(155,76){$\scriptstyle \u{45}$}  
      \put(90,68){\line(1,1){20}} \put(85,68){\line(-1,1){20}}  
      \put(78,76){$\scriptstyle \u{45}$}  \put(100,73){$\scriptstyle \u{13}$}  
      \put(40,68){\line(1,1){20}} 
        \put(50,73){$\scriptstyle \u{36}$}  
      \put(185,68){\line(-1,1){20}}  
        \put(180,76){$\scriptstyle \u{12}$} 
      \put(110,118){\line(-2,-1){35}} \put(120,118){\line(2,-1){35}}  \put(115,118){\line(0,-1){20}}
      \put(70,108){$\scriptstyle \u{13}$}  \put(117,108){$\scriptstyle \u{45}$}  \put(150,108){$\scriptstyle \u{23}$}
     \end{picture}} 
$$
Using the chains in~(\ref{eq:axampleschub}) and equation~(\ref{eq:KschubF}) we can compute the quasisymmetric function associated to this interval and we get
$$
 \begin{array} {rcl}
   K_{[142635,356124]_3} &=& F_{13} + F_{121} + F_{22} + F_{112} + F_{121} + F_{31} + F_{211} + F_{22}\cr
                                          &=& S_{31} + S_{22} + S_{211}.
 \end{array}$$
\end{example}

\null

Notice that the functions $K_{[u,w]_r}$ encode the nonzero connected components of the given interval under the relations~(\ref{eq:relschub}).

The combinatorial proof of the positivity of the $c_{u,(\lambda,r)}^w$ coefficients exposed in ~\cite{assafbsottile} uses the techniques given in~\cite{A_JAMS} in the sense that the construction of a weak dual graph on the chains of $[u,v]_k$ is done by means of a refinement of the relations~(\ref{eq:relschub}). In other words, to go from equation~(\ref{eq:KschubF}) to equation~(\ref{eq:KschubS}) one needs to understand fully the combinatorics of the chains in $[u,w]_r$, as we briefly reviewed here, and then define natural
dual knuth operations on the chains, along the lines of~\cite{assafbsottile}. 

In Section~\ref{sc:SSimbedding} we will show that the connected components
of the chains for the $r$-Bruhat order where $r$ is arbitrary, embed as a connected component of the corresponding theory for the $0$-grassmannian in the affine strong Bruhat graph governing the multiplication of dual $k$-Schur functions.

\section{$k$-Schur Functions and affine Grassmannians.}\label{sc:kSchur}

The $k$-Schur functions were originally defined combinatorially in terms of
$k$-atoms, and conjecturally provide a positive decomposition of the Macdonald
polynomials~\cite{LLM03}. These functions have several definitions and it is conjectural that they are equivalent (see~\cite{LLMS}). In this paper we will adopt the definition given by the $k$-Pieri rule
and $k$-tableaus (see~\cite{lapointemorseDEF,LLMS}) since this gives us a relation with the homology and cohomology of the affine grassmannians and therefore, we get positivity in their structure constants.

Different objects index $k$-Schur functions: $0$-grassmannian permutations, $k+1$-cores, $k$-bounded partitions. Originally (as in~\cite{LLM03}), $k$-Schur functions were indexed by $k$-bounded partitions $\lambda=(\lambda_1,\lambda_2,\ldots,\lambda_\ell)$ where $\lambda_1\le k$. These partitions are in bijection with $k+1$-cores (see ~\cite{LM1}). By definition, $k+1$-cores are integer partitions $\mu=(\mu_1,\mu_2,\ldots,\mu_m)$ with no hook of lenght $k+1$. To close the loop,  in~\cite{bjornerbrenti} it is shown that $k+1$-cores are in bijection with $0-$grassmannian permutations in the affine symmetric group (see also~\cite{BBTZ, LLMS}).

\subsection{Affine Grassmannians and the affine weak order.}
The affine symmetric group $W$ is generated by reflections $s_i$ for $i \in \{0,1,\ldots,k\}$, subject to the relations: 
 $$		s_i^2 = 1 ;\qquad
		s_is_{i+1}s_i = s_{i+1}s_is_{i+1};\qquad
		s_is_j = s_js_i  \ \textrm{ if } i-j \neq \pm 1,
 $$
 where $i-j$ and $i+1$ are understood to be taken modulo $k+1$.
 Let $w\in W$ and  denote its length by $\ell(w)$, given by the minimal number of generators needed to write a reduced expression for $w$. We let $W_0$ denote the parabolic subgroup obtained from $W$ by removing the generator $s_0$. This is naturally isomorphic to the symmetric group $\S_{k+1}$.
For more details on affine symmetric group see \cite{bjornerbrenti}.

Let $u\in W$ be an affine permutation. This permutation can be represented using window notation. That is, $u$ can be seen as a bijection from $\mathbb Z$ to $\mathbb Z$, so that if $u_i$ is the image of the integer $i$ under $u$, then it can be seen as a sequence:
$$
u=\cdots | u_{-k}\; \cdots\; u_{-1}\; u_{0}\underbrace{| u_1\; u_2\; \cdots\; u_{k+1} |}_{\text{main window}}u_{k+2}\; u_{k+3}\; \cdots\; u_{2k+2}|\cdots
$$
Moreover, $u$ satisfies the property that  $u_{i+k+1}=u_i+k+1$ for all $i$, and the sum of the entries in the main window $u_1+u_2+ \cdots+ u_{k+1}={{k+2}\choose{2}}$. Notice that in view of the first property, $u$ is completely determined by the entries in the main window. 
In this notation, the generator $u=s_i$ is the permutation such that $u_{i+m(k+1)}=i+1+m(k+1)$ and $u_{i+1+m(k+1)}=i+m(k+1)$ for all $m$, and $u_j=j$ for all other values. The multiplication $uw$ of permutations $u,w$ in $W$ is the usual composition given by $(uw)_i = u_{w_i}$. In view of this, the parabolic subgroup $W_0$ corresponds to the $u\in W$ such that the numbers $\{1,2,\ldots,k+1\}$ appear in the main window. 

Now, let $W^0$ denote the set of minimal length coset representatives of $W/W_0$. In this paper we take right coset representatives, although left coset representatives could be taken also. The set of permutations in $W^0$ are the \emph{affine grassmannian permutations} of $W$, or $0$-grassmannians for short. 

\begin{definition}\label{grassmannian}
The {\sl affine $0$-grassmannian} $W^0$ are  the permutations $u\in W$ such that the numbers $1,2,\ldots,k+1$ appear from left to right in the sequence $u$.\end{definition}

\begin{example}\label{5core}
Let $k=4$ and 
$$
u=\cdot\cdot\cdot | \bar{3}\;\bar{2}\;1\;\bar{5}\;\bar{1}\underbrace{| 2\;3\;6\;\bar{0}\;4|}_{\text{main window}}7\;8\;11\;5\;9|\cdot\cdot\cdot 
$$
where $\bar{i}$ stands for $-i$. By convention we say that $0$ is negative.
This permutation $u$ is $0$-grassmannian and it corresponds to the $5$-core $\mu=(4,1,1)$. The correspondence
is easy to see from the window notation. We just need to read the sequence of entries of $u$, drawing a vertical step down for each negative entry,
and an horizontal step right for each positive entry. The result is the diagram of $\mu$:
$$
 \raise -0pt\hbox{ \begin{picture}(100,100)
 \rouge{   \put(0,0) {\line(1,0){80}}
               \put(0,0) {\line(0,1){60}} }
 \put(-1,20) {\line(1,0){2}}    \put(-1,40) {\line(1,0){2}}     \put(-1,60) {\line(1,0){2}}   
 \put(20,-1) {\line(0,1){2}}    \put(40,-1) {\line(0,1){2}}     \put(60,-1) {\line(0,1){2}}    \put(80,-1) {\line(0,1){2}}
 \put(0,60) {\line(0,1){20}}   \put(0,60) {\line(1,0){20}}  \put(20,60) {\line(0,-1){40}} \put(20,20) {\line(1,0){60}}
  \put(80,20) {\line(0,-1){20}} \put(80,00) {\line(1,0){20}}
  \put(-2,85){$\vdots$}    \put(105,0){$\ldots$}
  \put(-6,66){$\scriptstyle \bar{2}$}  
  \put(8,62){$\scriptstyle {1}$}  
  \put(14,46){$\scriptstyle \bar{5}$}    \put(14,26){$\scriptstyle \bar{1}$}  
  \put(28,22){$\scriptstyle {2}$}    \put(48,22){$\scriptstyle {3}$}    \put(68,22){$\scriptstyle {6}$}  
  \put(74,6){$\scriptstyle \bar{0}$}  
  \put(88,2){$\scriptstyle {4}$}     
      \end{picture}} 
$$
\end{example}

\subsection{$k$-Schur functions.}
As previously  mentioned, $0$-grassmannian permutations index $k$-Schur functions, which we will denote by $S_u^{(k)}$ for some $u\in W^0$. 

Given $u\in W$, we say that $u \lessdot_w us_i$ is a cover for the weak order if $\ell(us_i)=\ell(u)+1$ and we label this cover by $i$. The weak order on $W$ is the transitive closure of these covers. The Pieri rule for $k$-Schur functions is described by certain chains in the weak order of $W$ restricted to $W^0$. This result is given in ~\cite{lapointemorseDEF,Lam,LLMS}. On the other hand, this  same rule is satisfied by the Schubert grassmannian for the affine symmetric group~\cite{Lam}. 

\noindent Here, we describe the Pieri rule as follows. 
A saturated chain $\omega$ of length $m$ in the weak order with end point ${\rm end}(\omega)$, gives us a sequence of labels $(i_1,i_2,\ldots,i_m)$. We say that the sequence $(i_1,i_2,\ldots,i_m)$ is cyclically increasing if $i_1,i_2,\ldots,i_m$ lies clockwise
on a clock with hours $0,1,\ldots,k$ and $\min\big\{ j : 0\le j\le k; \ j\notin \{i_1,i_2,\ldots,i_m\}\big\}$ lies between $i_m$ and $i_1$.
In particular we must have $1\le m\le k$.
Now, to express the Pieri rule, we first remark that for $1\le m\le k$, the homogeneous symmetric function $h_m $ corresponds to the $k$-Schur function $S_{v(m)}^{(k)}$ where $v(m)$ is a $0$-grassmannian whose main window is given by $|2\;\cdots\;m\;\bar{0}\;m+1\;\cdots\;k\;k+2|$.
Then, the multiplication of a $k$-Schur function $S_u^{(k)}$ by a homogeneous symmetric function $h_m$   
is given by
\begin{equation}\label{eq:PierikSchur}
   S_u^{(k)} h_m:=\ \sum_{\omega \text{ cyclically increasing }}S_{{\rm end}(\omega)}^{(k)},
\end{equation}
where $\omega$ has length exactly $m$.

\noindent Iterating equation (\ref{eq:PierikSchur}) one can easily see that
\begin{equation}\label{eq:hkschur}
 h_\lambda = \ \sum_{u}  {\rm K}_{\lambda,u}S_{u}^{(k)}
 \end{equation}
is a triangular relation~\cite{lapointemorseDEF}. One way to define $k$-Schur functions is to start with equation~(\ref{eq:PierikSchur}) as a rule, and define them as follows.
\begin{definition}\label{def:kschu}
 The {\sl $k$-Schur functions} are the unique symmetric funtions $S_{u}^{(k)}$ obtained by inverting the matrix $[{\rm K}_{\lambda,u}]$ obtained from (\ref{eq:hkschur}) above.
\end{definition}

 It is clear that we can define a Pieri operator like equation~(\ref{eq:PieriOpSchubert})
using the notion of a cyclically increasing chain. Using equation~(\ref{eq:KschubM}), this allows us to define a function $K_{[u,w]_w}$ for any interval in the weak order of $W$. 

\begin{example} \label{ex:kschur}
Let $k=2$ and $u=|\bar{0}\; 2\; 4|$. We consider the interval $[u,w]_w$ in the weak order where $w=|\bar{3}\;4\;5|$. This interval is a single chain 
 $$
 u =|\bar{0}\; 2\; 4|\ \stackrel{1}{\longrightarrow}\
               |2\; \bar{0}\; 4|\ \stackrel{2}{\longrightarrow}\  |2 \; 4\; \bar{0}| 
                    \stackrel{0}{\longrightarrow}\  |\bar{3}\;4\;5|=w\,.
 $$
In this case, we remark that $\langle u.\h_{1}\h_{1}\h_{1},w\rangle=\langle u.\h_{2}\h_{1},w\rangle=\langle u.\h_{1}\h_{2},w\rangle=1$
are the only nonzero entries in~(\ref{eq:KschubM}) and we get
$$
 \begin{array} {rcl}
   K_{[u,w]_w} &=& M_{111}+M_{21}+M_{12}\cr
                       &=& F_{12} + F_{21} - F_{111} \cr
                       &=& S_{21} - S_{111}.
 \end{array}$$
\end{example}

\noindent This small example shows some of the behavior of the (quasi)symmetric function $K_{[u,w]_w}$ for the weak order of $W$. In general, it is not $F$-positive nor Schur positive. Although, these functions contain some information about the structure constants, it is not enough to fully understand them combinatorially, in particular, these functions lack some of the properties needed to use the theory developed in ~\cite{A_JAMS}. These functions were first defined in ~\cite{bmws} in terms of the $M$-basis, but the definition given there in terms of the $F$-basis is wrong. Later on, Postnikov rediscovered them in ~\cite{postnikov} with more combinatorics involved, even though their combinatorial expansion in terms of Schur functions is still open.

\subsection{Dual $k$-Schur functions.}

Let $\Lambda = \mathbb Z[h_1,h_2,\dots]$ be the Hopf algebra of symmetric functions (see~\cite{macdonald} for more details on symmetric functions). The space of $k$-Schur functions $\Lambda_{(k)}$ can be seen as a subalgebra of $\Lambda$ spanned by $ {\mathbb Z}[h_1,h_2,\ldots,h_k]$. In fact, it is a Hopf subalgebra whose comultiplication defined in the homogeneous basis is
given by
 $$\Delta(h_m) =\sum_{i=0}^m h_i\otimes h_{m-i} $$
 and extended algebraically.
 The degree map is given by $\deg(h_m)=m$. 
 The space $\Lambda$ is a self dual Hopf algebra where the Schur functions $S_\lambda$ form a self dual basis under the pairing $\langle h_\lambda,m_\mu \rangle=\delta_{\lambda,\mu}$ where the $m_\lambda$ denote the monomial symmetric functions. 
 
\noindent Now, by the previous paragraph we have the inclusion $\Lambda_{(k)}\hookrightarrow \Lambda$, which turns into a projection $\Lambda\to\!\!\!\!\!\to\Lambda^{(k)}$ when passing to the dual space, where $\Lambda^{(k)}=\Lambda_{(k)}^*$ is the graded dual of $\Lambda_{(k)}$. It can be checked that the kernel of this projection is the linear span of $\{m_\lambda:\lambda_1>k\}$, hence
  $$\Lambda^{(k)}\  \cong\  \Lambda \big/ \langle m_\lambda:\lambda_1>k \rangle\,. $$
The graded dual basis to $S_u^{(k)}$ will be denoted here by ${\frak S}_u^{(k)}=S_u^{(k)*}$ which are also known as the affine Stanley symmetric functions. The multiplication of the dual $k$-Schur ${\frak S}_u^{(k)}$
is described in terms of the affine Bruhat graph as we will see in the next section. 

\section{Affine Bruhat Graph}\label{sc:strongbruhat}

\subsection{Affine Bruhat order.}

Let $t_{a,b}$ be the transposition in $W$ such that for all $m\in\mathbb Z$, permutes $a+m(k+1)$ and $b+m(k+1)$ where $b-a\leq k$.
The \emph{affine Bruhat order}  is given by its covering relation. Namely, for $u\in W$, we  have $u\lessdot ut_{a,b}$ is a cover in the affine Bruhat order if $\ell(ut_{a,b})=\ell(u)+1$.

\begin{proposition}[see \cite{bjornerbrenti}]\label{prop:bruhatcover}
 For $u\in W$ and $b-a\le k$, we have that $u\lessdot ut_{a,b}$ is a cover in the \emph{Bruhat order} if and only if $u(a)<u(b)$ and for all $a<i<b$ we have $u(i)<u(a)$ or $u(i)>u(b)$. 
\end{proposition}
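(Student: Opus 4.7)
The plan is to compute $\ell(ut_{a,b})-\ell(u)$ directly via an inversion count and show the difference equals $+1$ precisely under the stated conditions. Recall that for $u\in W$ the length can be realized as the (finite) cardinality
$$\ell(u) \ =\ \#\bigl\{(i,j):\ 1\le i\le k+1,\ i<j,\ u(i)>u(j)\bigr\},$$
the finiteness being a consequence of the periodicity $u(j+k+1)=u(j)+k+1$. Right-multiplication by $t_{a,b}$ swaps the values at all positions $a+m(k+1)$ and $b+m(k+1)$, but preserves this periodicity, so the length of $ut_{a,b}$ is counted by the same formula applied to the swapped permutation. The task is to compare the two inversion sets and show the symmetric difference has odd size, equal to $1$ exactly when the bracketing condition fails to produce extra contributions.

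After normalizing so that $1\le a\le k+1$, I would split the inversions that change into three families. First, the pair $(a,b)$ itself contributes $+1$ to the difference precisely when $u(a)<u(b)$ (so that the swap creates an inversion there). Second, for each $c$ with $a<c<b$, the two pairs $(a,c)$ and $(c,b)$ (or their appropriate translates, accessible because $b-a\le k$ ensures the interval $(a,b)$ is disjoint from its nontrivial translates) swap roles: a direct case check gives a joint contribution of $+2$ when $u(a)<u(c)<u(b)$, $-2$ when $u(b)<u(c)<u(a)$, and $0$ otherwise. Third, for positions $c$ lying outside $[a,b]$ in the fundamental window but whose pair with some translate of $a$ or $b$ lies inside the summation, one pairs each affected inversion with its image under the $(k+1)$-shift and checks that the contributions cancel, using $u(j+k+1)=u(j)+k+1$. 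Summing,
$$\ell(ut_{a,b})-\ell(u)\ =\ \mathrm{sgn}\bigl(u(b)-u(a)\bigr)\Bigl(1+2\,\#\{c:\ a<c<b,\ u(a)<u(c)<u(b)\ \text{or}\ u(b)<u(c)<u(a)\}\Bigr).$$
This is $+1$ iff $u(a)<u(b)$ and no $c\in(a,b)$ has $u(c)$ strictly between $u(a)$ and $u(b)$, and since $u$ is a bijection this last condition is equivalent to $u(i)<u(a)$ or $u(i)>u(b)$ for every $a<i<b$.

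The main obstacle is the third family: carefully accounting for inversions that involve periodic translates of $a$ or $b$ outside the fundamental window. The condition $b-a\le k$ is essential here, because it guarantees that the open interval $(a,b)$ contains no translates of $a$ or $b$ themselves, so the interior count in the second step is clean; the remaining translate contributions must be organized into cancelling pairs by a somewhat delicate bookkeeping argument (or, alternatively, deduced from the general root-theoretic description of Bruhat covers in an affine Weyl group, where $t_{a,b}$ is the reflection in a positive real affine root and one reads off $\ell(ut_\alpha)-\ell(u)$ from the inversion set $u(\Phi^-)\cap\Phi^+$). Either route ultimately reduces the statement to the inversion-counting identity above.
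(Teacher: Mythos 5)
The paper does not actually prove this proposition; it is quoted from Bj\"orner--Brenti, so there is no internal argument to compare against. Your inversion-counting route is essentially the standard proof of that cited result: one takes $\ell(u)=\#\{(i,j): 1\le i\le k+1,\ i<j,\ u(i)>u(j)\}$ and computes the change under right multiplication by $t_{a,b}$, and the identity you state for $\ell(ut_{a,b})-\ell(u)$ is correct (I checked it on several affine examples, including ones where $|u(b)-u(a)|>k+1$). The outline is sound and you correctly isolate where $b-a\le k$ is used. One point of bookkeeping you should be more careful about: the ``$+1$ from the pair $(a,b)$ itself'' is literally true only when $|u(b)-u(a)|<k+1$. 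In general the translate pairs $(a,\,b+m(k+1))$ and $(b,\,a+m(k+1))$ (or $(b-k-1,\,a+m(k+1))$ when $b>k+1$) also flip, and they do not cancel individually --- they telescope, with the counts $\lfloor |u(b)-u(a)|/(k+1)\rfloor$ appearing on both sides and a net contribution of $\mathrm{sgn}(u(b)-u(a))$. So these pairs cannot simply be filed under your third, ``cancelling'' family; they form their own telescoping block. Likewise, for $a<c<b$ the clean $\pm 2$ contribution is really the aggregate over the pairs $(c,a+m(k+1))$ and $(c,b+m(k+1))$ for all $m$, which works out to $\mathrm{sgn}(u(c)-u(a))-\mathrm{sgn}(u(c)-u(b))$; the two ``principal'' pairs alone do not account for it when $u(c)$ differs from $u(a)$ or $u(b)$ by more than $k+1$. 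With that bookkeeping made explicit (or replaced by the closed-form length formula $\ell(u)=\sum_{i<j}\bigl|\lfloor (u(j)-u(i))/(k+1)\rfloor\bigr|$, which packages the telescoping automatically), your argument is a complete and correct proof.
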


\noindent Notice that if $a'=a+m(k+1)$ and $b'=b+m(k+1)$ then $t_{a',b'}=t_{a,b}$, therefore, many different choices of $a$ and $b$ give the same covering as long as they satisfy the conditions of the proposition. 

\subsection{Affine $0$-Bruhat graph.} 

The affine $0$-Bruhat order arises as a suborder of the Bruhat order. We define it by its covers. For $u\in W$,  a covering $u\lessdot_0 ut_{a,b}$ is encoded by transposition $t_{a,b}$ satisfying proposition \ref{prop:bruhatcover} and also $u(a)\leq 0<u(b)$. As noticed before, a transposition $t_{a',b'}$ satisfying the same conditions as $t_{a,b}$ gives the same affine Bruhat covering relation as long as $a'\equiv a$, $b'\equiv b$ modulo $k+1$. In view of this, we introduce a multigraph instead of a graph for the affine $0$-Bruhat order, since we want to keep track of  the distinct $a,b$ such that $u\lessdot_0 ut_{a,b}$ is an affine $0$-Bruhat covering  for a given $u$.

We then define the following operators in a similar way to the ones defined in equation~(\ref{eq:operat}). For any $b-a\le k+1$, let
\begin{equation}\label{eq:0bruhatop}
\begin{array}{rcl}
\t{\a\b} \colon\  {\mathbb Z}W&\longrightarrow& \quad{\mathbb Z}W,\\
u\quad&\longmapsto&\ \ \rule{0pt}{28pt} \left\{\begin{array}{ll}
ut_{a,b}   
&\mbox{ if } u\lessdot ut_{a,b} \mbox{ and } u(a)\le 0 < u(b)
\medskip       
\\      0& \mbox{ otherwise.}\end{array}\right.
\end{array} 
\end{equation}

We will write these operators as acting on the right: $u\t{\a\b}$. Remark now that if $u\t{\a\b}\ne 0$, then $u\t{\a\b}=u\t{\a',\b'}\ne 0$ for only finitely many values of $m$ with $a'=a+m(k+1)$ and $b'=b+m(k+1)$. To see this, it is enough to notice that there exists $m$ such that $u(a+m(k+1))\geq 0$  and similarly for $b$. 

%

\begin{definition}\label{def:0Bruhatgraph}
The {\sl affine $0$-Bruhat graph} is the directed multigraph with vertices $W$ and a labeled edge $ u  \stackrel{b}{\longrightarrow} u\t{ab} $ for every $u\t{a,b}\ne 0$.
We denote by $[u,w]$ the {\sl set of paths} from $u$ to $w$.
Remark that all such paths will have the same length, namely $\ell(w)-\ell(u)$.
\end{definition}

\begin{example} \label{ex:0Bruhat} We give below the interval $[|\bar{6}\,8\,3\,\bar{1}\,4\,13|,|8\,\bar{6}\,\bar{2}\,9\,13\,\bar{1}|]$ in the affine $0$-Bruhat graph:
$$
\raise 15pt\hbox{ \begin{picture}(300,190)
      \put(40,-2){$\scriptstyle \cdots \bar{8}\,1|\overline{12}\,2\,\bar{3}\,\bar{7}\,\bar{2}\,7
                              \underbrace{\scriptstyle |\bar{6}\,8\,3\,\bar{1}\,4\,13|}_{\text{main}}\bar{0}\,14\cdots$}
      \put(105,8){\line(-2,1){52}}   \put(108,8){\line(-2,1){52}}   \put(111,8){\line(-2,1){52}}  \put(61,16){$\scriptscriptstyle \bar{4};2;8$}  
      \put(114,8){\line(0,1){27}} \put(116,8){\line(0,1){27}} \put(109,20){$\scriptscriptstyle \bar{6}\,\,\,\bar{0}$}  
      \put(120,8){\line(2,1){50}}     \put(150,18){$\scriptscriptstyle 3$}
      \rouge{ \put(130,8){\line(3,1){80}} \put(200,16){$\scriptscriptstyle 5$}}
      \put(30,40){$\scriptstyle |8\,\bar{6}\,3\,\bar{1}\,4\, 13|$}
      \put(90,40){$\scriptstyle |\bar{6}\,8\,3\,\bar{1}\,13\,4|$}
      \put(150,40){$\scriptstyle |\bar{6}\,8\,\bar{2}\,\bar{1}\,9\,13|$}
      \rouge{ \put(210,40){$\scriptstyle |\bar{6}\,8\,3\,4\,\bar{1}\,13|$}}
      \rouge{ \put(148,48){\line(-2,1){78}}   \put(90,80){$\scriptscriptstyle \bar{6}$}}
      \rouge{\put(52,48){\line(4,1){163}}       \put(195,78){$\scriptscriptstyle 5$}}
      \rouge{ \put(204,48){\line(-2,1){78}}   \put(182,52){$\scriptscriptstyle {3}$}}
      \rouge{ \put(100,48){\line(4,1){158}}       \put(240,78){$\scriptscriptstyle 6$}  }   
      \put(18,48){\line(-1,1){40}} \put(20,48){\line(-1,1){40}} \put(-5,52){$\scriptscriptstyle \bar{6}; \bar{0}$} 
      \put(22,48){\line(3,1){118}}       \put(110,80){$\scriptscriptstyle 3$}
      \put(64,48){\line(-2,1){80}}   \put(67,48){\line(-2,1){81}}   \put(70,48){\line(-2,1){81}}    \put(26,72){$\scriptscriptstyle \bar{4};2;8$}  
      \put(75,48){\line(-1,1){40}}    \put(55,70){$\scriptscriptstyle 3$}  
      \put(133,48){\line(-1,1){40}}  \put(135,48){\line(-1,1){40}}  \put(108,58){$\scriptscriptstyle \bar{1}; 5$}  
      \put(140,48){\line(1,3){13}}  \put(142,48){\line(1,3){13}}  \put(144,48){\line(1,3){13}}   \put(155,70){$\scriptscriptstyle \bar{4};2;8$}  
      \put(-40,90){$\scriptstyle |8\,\bar{6}\,3\,\bar{1}\, 13\,4|$}
      \put(20,90){$\scriptstyle |\bar{6}\,8\,\bar{2}\,\bar{1}\,13\,9|$}
      \put(80,90){$\scriptstyle |\bar{6}\,8\,\bar{2}\,9\,\bar{1}\,13|$}
      \put(140,90){$\scriptstyle |8\,\bar{6}\,\bar{2}\,\bar{1}\,9\, 13|$}
       \rouge{    \put(200,90){$\scriptstyle |8\,\bar{6}\,3\,4\,\bar{1}\, 13|$}}
       \rouge{    \put(250,90){$\scriptstyle |\bar{6}\, 8\,3\,4\, 13\, \bar{1}|$}}
        \rouge{ \put(196,98){\line(-2,5){16}} \put(198,98){\line(-2,5){16}}  \put(200,98){\line(-2,5){16}} \put(195,114){$\scriptscriptstyle \bar{6};\bar{0};6$} }
        \rouge{ \put(-30,98){\line(5,1){200}}         \put(100,118){$\scriptscriptstyle 6$}  } 
        \rouge{ \put(168,98){\line(-1,1){40}}         \put(150,107){$\scriptscriptstyle 3$}  } 
        \rouge{ \put(220,98){\line(-4,1){160}}       \put(142,120){$\scriptscriptstyle 3$}  } 
        \rouge{ \put(100,98){\line(-3,1){120}}           \put(-1,136){$\scriptscriptstyle \bar{6}$}}
       \put(-65,98){\line(2,3){27}}    \put(-60,115){$\scriptscriptstyle 3$}  
      \put(-15,98){\line(-1,2){20}} \put(-13,98){\line(-1,2){20}} \put(-11,98){\line(-1,2){20}}        \put(-42,115){$\scriptscriptstyle \bar{4};2;8$}  
      \put(-9,98){\line(3,4){30}} \put(-7,98){\line(3,4){30}}     \put(-8,119){$\scriptscriptstyle \bar{0};6$}  
      \put(40,98){\line(-1,5){8}}   \put(42,98){\line(-1,5){8}}   \put(44,98){\line(-1,5){8}}    \put(20,105){$\scriptscriptstyle \bar{6};\bar{0};6$}  
      \put(46,98){\line(1,1){40}}  \put(48,98){\line(1,1){40}}  \put(50,98){\line(1,1){40}}     \put(68,110){$\scriptscriptstyle \bar{4};2;8$}  
      \put(105,98){\line(-1,5){8}}  \put(107,98){\line(-1,5){8}}    \put(105,110){$\scriptscriptstyle \bar{1};5$}  
      %
      %
      \put(-50,140){$\scriptstyle |8\,\bar{6}\,\bar{2}\,\bar{1}\, 13\,9|$}
      \put(10,140){$\scriptstyle  |\bar{6}\,8\,\bar{2}\,9\,13\,\bar{1}|$}
      \put(70,140){$\scriptstyle |8\,\bar{6}\,\bar{2}\,9\,\bar{1}\,13|$}
     \rouge{ \put(130,140){$\scriptstyle |8\,\bar{6}\,3\,4\, 13\,\bar{1}|$}}
      \put(-33,148){\line(2,1){60}} \put(-30,148){\line(2,1){60}}   
      \put(-16,170){$\scriptscriptstyle \bar{0};6$}  
       \put(31,148){\line(0,1){30}}  \put(33,148){\line(0,1){30}}  \put(35,148){\line(0,1){30}} 
       \put(14,160){$\scriptscriptstyle \bar{4};2;8$}  
      \put(83,148){\line(-3,2){45}}   \put(86,148){\line(-3,2){45}}   \put(89,148){\line(-3,2){45}} 
       \put(45,158){$\scriptscriptstyle \bar{6};\bar{0};6$}  
   \rouge{   \put(138,148){\line(-3,1){90}}   \put(85,170){$\scriptscriptstyle 3$} }
      \put(20,180){$\scriptstyle |8\,\bar{6}\,\bar{2}\,9\,13\,\bar{1} |$}
      \rouge{  \put(152,48){\line(1,3){13}}  \put(154,48){\line(1,3){13}}  \put(156,48){\line(1,3){13}}   \put(138,55){$\scriptscriptstyle \bar{4};2;8$}  }
      \rouge{ \put(150,48){\line(3,2){60}}  \put(153,48){\line(3,2){60}}  \put(156,48){\line(3,2){60}}    \put(202,71){$\scriptscriptstyle \bar{6};\bar{0};6$}  }   
      \rouge{ \put(200,98){\line(-3,2){60}}  \put(203,98){\line(-3,2){60}}  \put(206,98){\line(-3,2){60}}   \put(170,125){$\scriptscriptstyle \bar{4};2;8$}}
      \end{picture}} 
$$
In this example we see that there are three arrows from $u=|\bar{6}\,8\,3\,\bar{1}\,4\,13|$ to $w=|8\,\bar{6}\,3\,\bar{1}\, 13\,4|$.
We have $u\t{\bar{5}\bar{4}}=u\t{12}=u\t{78}=w$ labeled by $\bar 4,2,8$, respectively. Also we have operators that evaluate to 0, namely, $u\t{\overline{11}\,\overline{10}}=0$. 
\end{example}

When restricted to $0$-grassmannian permutations, the affine $0$-Bruhat graph behaves well as shown in the next lemma whose proof (for left coset)  can be consulted in~\cite[Prop. 2.6]{LLMS}. Therefore, we will restrict the affine $0$-Bruhat graph to permutations in $W^0$.

\begin{lemma}\label{lem:0grass}
If $u\t{ab}=w$ and $u\in W^0$, then we have that $w\in W^0$. 
\end{lemma}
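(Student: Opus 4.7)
The plan is to use the reformulation of Definition~3.1: $u \in W^0$ iff $u^{-1}(1) < u^{-1}(2) < \cdots < u^{-1}(k+1)$. Writing $p_j := u^{-1}(j)$ and noting $(ut_{a,b})^{-1} = t_{a,b}\, u^{-1}$, the task is to verify that the sequence $t_{a,b}(p_1), \ldots, t_{a,b}(p_{k+1})$ is still strictly increasing.

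First, I would show the positions $p_1, \ldots, p_{k+1}$ form a complete residue system modulo $k+1$: if $p_j \equiv p_\ell \pmod{k+1}$ with $j < \ell$, periodicity would force $u(p_\ell) = j + n(k+1) > k+1$, contradicting $u(p_\ell) = \ell$. Since $b - a \le k$ gives $a \not\equiv b \pmod{k+1}$, there are unique indices $\alpha, \beta \in \{1,\ldots,k+1\}$ with $p_\alpha \equiv a$ and $p_\beta \equiv b$. Under $t_{a,b}$, every $p_j$ with $j \notin \{\alpha, \beta\}$ is fixed, while $p_\alpha \mapsto p_\alpha + (b-a)$ and $p_\beta \mapsto p_\beta - (b-a)$. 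The hypothesis $u(a) \le 0 < u(b)$ combined with periodicity pins down these locations: since $u(p_\alpha) = \alpha \ge 1$ but $u(a) \le 0$, we have $p_\alpha \ge a + (k+1) > b$, and symmetrically $p_\beta \le b$. Hence $p_\beta < p_\alpha$, forcing $\beta < \alpha$, and in the updated sequence monotonicity can fail only at two spots: one needs $p_{\beta-1} < p_\beta - (b-a)$ when $\beta \ge 2$ and $p_\alpha + (b-a) < p_{\alpha+1}$ when $\alpha \le k$.

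Both inequalities will be proved by contradiction, applying Proposition~4.1 to a suitably translated representative of the same transposition $t_{a,b}$. If the first inequality fails, I set $(a'', b'') := (p_\beta - (b-a), p_\beta)$; since $b'' - a'' = b - a$ we have $t_{a'', b''} = t_{a,b}$, while $u(b'') = \beta$ and $u(a'') = u(a) - m_b(k+1) \le 0$ (where $p_\beta = b - m_b(k+1)$, $m_b \ge 0$). The position $p_{\beta-1}$ then lies strictly in $(a'', b'')$ with $u(a'') \le 0 < \beta - 1 < \beta = u(b'')$, contradicting the cover condition of Proposition~4.1. The second inequality is symmetric: if it fails, take $(a', b') := (p_\alpha, p_\alpha + (b-a))$ so that $u(a') = \alpha$ and $u(b') = u(b) + n(k+1) \ge k+2$ (where $p_\alpha = a + n(k+1)$, $n \ge 1$); the position $p_{\alpha+1}$ witnesses $u(a') = \alpha < \alpha + 1 < u(b')$ strictly inside $(a', b')$, again contradicting Proposition~4.1. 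The main technical obstacle is precisely this bookkeeping around translated representatives: one must check that the shifted pair still represents the group element $t_{a,b}$, that the sign constraints transport correctly so the hypothesis of Proposition~4.1 applies, and that the candidate obstruction $p_{\beta-1}$ (resp.\ $p_{\alpha+1}$) lands strictly inside the open interval rather than coinciding with an endpoint, after which the cover condition delivers both contradictions immediately.
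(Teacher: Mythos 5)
Your proof is correct. Note that the paper itself does not prove this lemma at all --- it defers to \cite[Prop.~2.6]{LLMS} (stated there for left cosets) --- so there is no internal argument to compare against; what you have written is a sound, self-contained replacement. Your route is the natural direct one: characterize $W^0$ by $u^{-1}(1)<\cdots<u^{-1}(k+1)$, observe that $t_{a,b}$ moves only the two positions $p_\alpha\equiv a$, $p_\beta\equiv b$ (by $+(b-a)$ and $-(b-a)$ respectively), use $u(a)\le 0<u(b)$ together with periodicity to locate $p_\alpha>b\ge p_\beta$ and hence $\beta<\alpha$, and then kill the only two possible monotonicity violations by applying Proposition~\ref{prop:bruhatcover} to the translated representatives $(p_\beta-(b-a),\,p_\beta)$ and $(p_\alpha,\,p_\alpha+(b-a))$. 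The two points requiring care are both handled: the cover criterion of Proposition~\ref{prop:bruhatcover} is invariant under simultaneous translation of $(a,b)$ by multiples of $k+1$ (since $u(i+m(k+1))=u(i)+m(k+1)$ preserves all the inequalities involved), and the candidate obstructions $p_{\beta-1}$, $p_{\alpha+1}$ cannot coincide with the endpoints because $u(a'')\le 0<\beta-1$ and $u(b')\ge k+2>\alpha+1$. What this buys over the citation is a proof entirely in the window-notation conventions of this paper (right cosets, operators acting on the right), which is worth having given that the cited result is stated for the left-coset convention.
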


Remark that the converse to Lemma~\ref{lem:0grass} is not true. Take $u=|\bar{0}\,2\, 1|$ and $w=|1\,2\,\bar{0}|$. We have that $w\in W^0$ and $w=u\t{13}$, but $u\not\in W^0$.

%
%
%
%

\subsection{Multiplication dual $k$-Schur.}
For dual $k$-Schur functions ${\frak S}^{(k)}_u$, the analogue of the Pieri formula~(\ref{eq:PierikSchur}) is given by
 \begin{equation}\label{eq:PieridualkSchur}
   {\frak S}_u^{(k)} h_m:=\ \sum_{u\t{a_1b_1}\cdots\t{a_mb_m}\ne 0 \atop b_1<b_2<\ldots <b_m}{\frak S}_{u\t{a_1b_1}\cdots\t{a_mb_m}}^{(k)},
\end{equation}
where the sum is over all increasing paths $b_1<b_2<\cdots <b_m$ starting at $u$~\cite{LLMS}.

\null

Since the Pieri formula is encoded by increasing chains in the affine $0$-Bruhat graph restricted to $W^0$, we can define Pieri operators similar to equation~(\ref{eq:PieriOpSchubert})
using increasing chains. 
This allows us to define the functions $K_{[u,w]}$ for any interval in the affine $0$-Bruhat graph restricted to $W^0$.  In contrast with the weak order, where we had cyclically increasing chains, any chain $\omega\in[u,w]$ has a well defined notion of descent. More precisely, for $\omega = \t{a_1b_1}\t{a_2b_2}\cdots\t{a_mb_m}$ we have $D(\omega)=(d_1,d_2,\ldots,d_s)$ denotes the unique composition of $n$ such that $b_i>b_{i+1}$ exactly in position $i\in\{d_1,d_1+d_2,\ldots,d_1+d_2+\cdots+d_{s-1}\}$. As in equation~(\ref{eq:KschubF}) we have
\begin{equation}\label{eq:KdualkschurF}
  K_{[u,w]}\ =\ \sum_{\omega\in [u,v]} F_{D(\omega)}
\end{equation}
and in this case $K_{[u,w]}$ is $F$ positive. 

\begin{theorem}\label{thm:dkschurconstant}
\begin{equation}\label{eq:KdualkschurS}
  K_{[u,w]}\ =\ \sum_\lambda c^w_{u,\lambda}\, S_\lambda
\end{equation}
where $c^w_{u,\lambda}$ is the coefficient of the dual $k$-Schur function
${\mathfrak S}^{(k)}_w$ in the product
${\mathfrak S}^{(k)}_u\cdot S_\lambda$.
\end{theorem}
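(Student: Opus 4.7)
The plan is to mirror the proof of Proposition~\ref{prop:BS} from the Schubert case, with the $k$-Pieri rule~(\ref{eq:PieridualkSchur}) playing the role of the Monk formula used there. The key intermediate identity, obtained by iterating~(\ref{eq:PieridualkSchur}), is
\[
\mathfrak{S}_u^{(k)} h_{\alpha_1} h_{\alpha_2}\cdots h_{\alpha_k} \;=\; \sum_{w\in W^0} \langle u.\h_{\alpha_1}\cdots\h_{\alpha_k},\, w\rangle\,\mathfrak{S}_w^{(k)}.
\]
Combined with the classical duality $\langle h_\lambda, m_\mu\rangle = \delta_{\lambda,\mu}$ in $\Lambda$, this will let me match both sides of the claimed identity after pairing with $h_\lambda$.

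First, I would rewrite $K_{[u,w]}$ in the monomial quasisymmetric basis. Using $F_\beta = \sum_{\alpha\le\beta} M_\alpha$ in~(\ref{eq:KdualkschurF}) and exchanging summations, the coefficient of $M_\alpha$ is the number of chains $\omega$ in $[u,w]$ with $\alpha\le D(\omega)$, i.e.\ those decomposable as the concatenation of $k$ strictly increasing subchains of lengths $\alpha_1,\ldots,\alpha_k$. (A short observation shows that consecutive labels $b_i, b_{i+1}$ cannot coincide in the affine $0$-Bruhat graph, since $v(b_{i+1}) = u(a_i) \le 0$ would contradict the cover condition; hence weak and strict increase agree within a non-descent block.) By the Pieri rule, this count is $\langle u.\h_{\alpha_1}\cdots\h_{\alpha_k}, w\rangle$, yielding the affine analogue of~(\ref{eq:KschubM}). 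Commutativity of the $h_m$'s in $\Lambda$ transfers via the displayed identity to $\h_i\h_j=\h_j\h_i$ on $\mathbb{Z} W^0$, so $\langle u.\h_\alpha, w\rangle$ depends only on $\lambda=\mathrm{sort}(\alpha)$. Grouping and using $\sum_{\mathrm{sort}(\alpha)=\lambda} M_\alpha = m_\lambda$ gives
\[
K_{[u,w]} \;=\; \sum_\lambda \langle u.\h_\lambda,\, w\rangle\, m_\lambda,
\]
which in particular shows that $K_{[u,w]}$ is symmetric.

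Next, I would identify the Schur coefficients via the Hall pairing. Writing $K_{[u,w]} = \sum_\mu \tilde c^w_{u,\mu}\,S_\mu$ and pairing with $h_\lambda$, self-duality of the Schur basis gives $\langle K_{[u,w]}, h_\lambda\rangle = \sum_\mu \tilde c^w_{u,\mu}\,K_{\mu,\lambda}$, while the $m$-basis expansion above yields $\langle K_{[u,w]}, h_\lambda\rangle = \langle u.\h_\lambda, w\rangle$. Separately, expanding $h_\lambda = \sum_\mu K_{\mu,\lambda} S_\mu$ inside $\mathfrak{S}_u^{(k)} h_\lambda$ and reading off the coefficient of $\mathfrak{S}_w^{(k)}$ yields $\langle u.\h_\lambda, w\rangle = \sum_\mu K_{\mu,\lambda}\,c^w_{u,\mu}$ by the definition of $c^w_{u,\mu}$. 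Equating the two gives $\sum_\mu (\tilde c^w_{u,\mu} - c^w_{u,\mu}) K_{\mu,\lambda} = 0$ for every $\lambda$; invertibility of the Kostka matrix (unitriangular in dominance order) forces $\tilde c^w_{u,\mu} = c^w_{u,\mu}$.

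The only delicate step is the first one: one must ensure that the descent composition $D(\omega)$ of a chain is genuinely well defined, so that grouping chains by refinement of a fixed composition $\alpha$ is meaningful. This is precisely why the argument succeeds in the affine $0$-Bruhat graph on $W^0$ but fails in the affine weak order, a distinction emphasized in the introduction and in the paragraph just before~(\ref{eq:KdualkschurF}). Once that structural point is secured, everything else is a formal consequence of the Pieri rule~(\ref{eq:PieridualkSchur}) and classical Hall duality on $\Lambda$.
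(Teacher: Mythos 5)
Your argument is correct and is essentially the paper's proof: the paper disposes of this theorem by citing the Pieri-operator machinery of \cite{bmws}, which is exactly what you reconstruct (symmetric Pieri operators give a symmetric $K_{[u,w]}$ whose $m_\lambda$-coefficients are $\langle u.\h_\lambda,w\rangle$, and Hall duality plus unitriangularity of the Kostka matrix identifies the Schur coefficients with the structure constants). Your explicit check that consecutive labels in a chain cannot coincide, so that the non-descent blocks are genuinely the strictly increasing chains of the Pieri rule~(\ref{eq:PieridualkSchur}), is a detail the paper leaves implicit but is exactly the point that makes the $0$-Bruhat graph work where the weak order fails.
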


The proof of this theorem follows from~\cite{bmws}.

\begin{example} \label{ex:0Bruhat2}
Considering the interval $[u,w]=[|\bar{6}\,8\,3\,\bar{1}\,4\,13|,|8\,\bar{6}\,\bar{2}\,9\,13\,\bar{1}|]$ 
we have in example~\ref{ex:0Bruhat}. The total number of chains is  $240$. In this case
  $$K_{[u,w]} =9 F_{1111}+ 30F_{112}+ 51F_{121} + 30F_{13} + 30F_{211}+ 51F_{22} + 30F_{31} + 9F_4 \,,$$
is symmetric and the expansion in term of Schur functions is positive
  $$K_{[u,w]} =9S_4+30S_{31}+21S_{22}+30S_{211}+9S_{1111}\,.$$
The reader is encouraged to use SAGE and see that the coefficients are indeed the structure constants we claim in Theorem~\ref{thm:dkschurconstant}.
\end{example}

\subsection{Relations of the operators $\t{ab}$.}\label{sec:rel0bruhat}
The purpose of this section is to understand some of the relations satisfied by the $\t{ab}$ operators restricted to $W^0$, similar to the work done with Schubert polynomials in ~\cite{assafbsottile,bsottilemonoid}. The main theorem of this section presents the needed relations among these operators. 

These relations  depend  on the following data. For $\t{ab}$ we need to consider $a,b,\overline{a},\overline{b}$ where $\overline{a}$ and $\overline{b}$ are the residue modulo $k+1$ of $a$ and $b$ respectively. Remark that  $\overline{a}\ne \overline{b}$ since $b-a<k+1$. Let $u\in W^0$. Lemma~\ref{lem:0grass} implies  that, if non-zero, $u\t{ab}$ and $u\t{ab}\t{cd}$ are both in $W^0$.  The different relations satisfied by the operators $\t{ab}$ and $\t{cd}$ depend on the relation among $\overline a, \overline b, \overline c, \overline d$. For this reason it is useful to visualize these operators as follows.  

$$
\qquad \raise 10pt\hbox{ \begin{picture}(360,110)
     \put(30,0){$c$} \put(60,0){$d$}  \put(230,0){$a$} \put(280,0){$b$}     
     \put(91,10){$\underbrace{\phantom{|\bar{6}\,8\,3\,\qquad \,\,\,\,\bar{1}\,4\,13|}}_{\text{main}}$}
     \put(-40,12){$u$} \put(0,15){\vector(1,0){360}} \put(90,13){\line(0,1){4}} \put(180,13){\line(0,1){4}}
     \put(-40,52){$u\t{ab}$} \put(0,55){\vector(1,0){360}} \put(90,53){\line(0,1){4}} \put(180,53){\line(0,1){4}}
     \put(-40,92){$u\t{ab}\t{cd}$} \put(0,95){\vector(1,0){360}} \put(90,93){\line(0,1){4}} \put(180,93){\line(0,1){4}}
                      \put(50,35){\line(1,0){50}} \put(140,35){\line(1,0){50}} \put(320,35){\line(1,0){30}} \put(0,35){\line(1,0){10}} 
                      \put(120,75){\line(1,0){30}} \put(210,75){\line(1,0){30}}   \put(300,75){\line(1,0){30}} 
     \linethickness{1mm}
                      \put(230,35){\line(1,0){50}}
                      \multiput(50,35)(5,0){11}{\line(1,0){2}} \multiput(140,35)(5,0){11}{\line(1,0){2}} \multiput(320,35)(5,0){9}{\line(1,0){2}} 
                      \multiput(-10,35)(5,0){5}{\line(1,0){2}}
                      \put(30,75){\line(1,0){30}}
                      \multiput(120,75)(5,0){7}{\line(1,0){2}}  \multiput(210,75)(5,0){7}{\line(1,0){2}}  \multiput(300,75)(5,0){7}{\line(1,0){2}} 
      \end{picture}} 
$$

Above the permutation $u$, the operator $\t{ab}$ is represented by drawing a bold line connecting positions $a,b$ and repeating this pattern to the left and to the right in all positions congruent to $a,b$ modulo $k+1$. Next, to the resulting permutation we apply $\t{cd}$, drawing a bold line connecting positions $c,d$ and repeating that pattern modulo $k+1$. The importance of visualizing not only the bold line but also the dotted ones, relies on the fact that even if in the diagram, the line representing $\t{ab}$ does not intersect the line representing $\t{cd}$, their ``virtual" copies (or dotted copies) might intersect and this will determine the commutation relation satisfied by these operators. Therefore, it will be enough if we consider the pattern produced by these two operators in the main window.

With these definitions in mind we present some of the relations satisfied by the $\mathbf{t}$ operators restricted to $W^0$ (there are less relations if we consider all of $W$).

\smallskip
\noindent {\bf (A)} $\t{ab}\t{cd}\equiv \t{cd}\t{ab}$ \qquad if $\overline{a},\overline{b},\overline{c},\overline{d}$ are distinct. 

\medskip
\noindent {\bf (B1)} $\t{ab}\t{cd}\equiv \t{cd}\t{ab}\equiv 0$ \qquad if ($a<c<b<d$) or ($b=c$ and $d-a>k+1$).

\medskip
\noindent {\bf (B2)} $\t{ab}\t{cd}\equiv 0$ \qquad if ($\overline{a}=\overline{c}$ and $b\le d$) or ($\overline{b}=\overline{d}$ and $c\le a$).

\medskip

\noindent There are more possible zeros than what we present in (B), but we will satisfy ourselves with these ones for now. It will be more important to
identify them in the second part of this work.
Now if the numbers $a,b,c,d$ are not distinct, then we must have $b=c$ or $d=a$. If $b=c$, then $d-a\le k+1$ in view of (B). Similarly if $d=a$ then $b-c\le k+1$. 

\medskip
\noindent {\bf (C1)} $\t{ab}\t{bd}=\t{ab}\t{b-k-1,a}$ \qquad if $d-a=k+1$,  

\smallskip
\noindent {\bf (C2)} $\t{ab}\t{bd}$ \  \ and \ \ $\t{bd}\t{ab}$ \qquad if $d-a<k+1$.

\medskip
Now we look at the cases $\t{ab}\t{cd}$ where $a,b,c,d$ are distinct but some equalities occur between $\overline{a},\overline{b}$ and $\overline{c},\overline{d}$. By symmetry of the relation we will assume that $b<d$ which (excluding (B)) implies that  $a<b<c<d$. 

\medskip
\noindent {\bf (D)} $\t{ab}\t{cd}=\t{d-k-1,c}\t{b-k-1,a}$ \qquad if $\overline{b}=\overline{c}$,\ \  $\overline{d}=\overline{a}$ and $(b-a)+(d-c)=k+1$.

\medskip
All the relations above are {\sl local}. This means that if $\t{ab}\t{cd}=\t{c'd'}\t{a'b'}$, then $|a'-a|$, $|b'-b|$, $|c'-c|$ and $|d'-d|$ are strictly less than $k+1$. For example in (D) we have $|b-k-1-a|$, $|a-b|$, 
$|d-k-1-c|$ and $|c-d|$ which are strictly less than $k+1$. 

\begin{remark}
The relations we care about in this paper and its sequel are all local. There are some relations that are not local:
$$ \t{ab}\t{cd}=\t{a-k-1,b-k-1}\t{cd}=\t{a+k+1,b+k+1}\t{cd},$$
if $c<a<b<d$. The full description of  the relations of the operators $\t{}$ is rather complicated and would take too much space here. It might be an interesting project in the future but at this point we will be satisfied with the given subset. Also,
in his Ph. D. thesis,~\cite{beligan} remarked that intervals $[u,w]_r$ in the $r$-Bruhat order containing chains produced by {\sl nested} operators $\u{ab}\u{cd}$ (i.e. where $c<a<b<d$) are problematic. Schensted insertion and jeu-de-taquin are well behaved as long as the intervals contain no nesting. Here we see that nesting creates even more problems.
\end{remark}

We now consider some more relations of length three:

\medskip
\noindent {\bf (E1)} $\t{bc}\t{cd}\t{ac}\equiv \t{bd}\t{ab}\t{bc}$  \qquad if $a<b<c<d$,

\smallskip
\noindent {\bf (E2)} $\t{ac}\t{cd}\t{bc}\equiv \t{bc}\t{ab}\t{bd}$  \qquad if $a<b<c<d$. 

\medskip\noindent
also we have

\smallskip
\noindent {\bf (F)} $\t{bc}\t{ab}\t{bc}\equiv \t{ab}\t{bc}\t{ab}  \equiv\ {\bf 0}$  \qquad if $a<b<c$ and $c-a<k+1$.  

\medskip

\begin{theorem}
 The relations (A)--(F) above describe relations between $\t{}$-operators in the Strong Bruhat graph.
\end{theorem}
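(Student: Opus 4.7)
My plan is to verify each of the relations (A)--(F) by a direct computation on an arbitrary $u\in W^0$. The ingredients are Proposition~\ref{prop:bruhatcover} (the Bruhat cover condition), the additional sign constraint $u(a)\le 0<u(b)$ appearing in the definition~(\ref{eq:0bruhatop}), the periodicity $u(i+k+1)=u(i)+k+1$, and Lemma~\ref{lem:0grass} which keeps us inside $W^0$. For each identity I need to check two things: that both sides produce the same affine permutation when nonzero, and that the intermediate cover and sign conditions hold or fail simultaneously on the two sides.

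Relation (A) is the simplest: when $\overline a,\overline b,\overline c,\overline d$ are pairwise distinct mod $k+1$, the supports of $t_{a,b}$ and $t_{c,d}$ are disjoint orbits under translation by $k+1$, so the two transpositions commute in $W$ and neither disturbs the other's cover or sign condition. For the zero relations (B1), (B2), (F) I show that one of the cover conditions must fail at the second (or third) step. For (B1) with $a<c<b<d$, if $u\t{ab}\ne 0$ then $u(a)<u(b)$ and no $u(i)$ lies strictly between them for $a<i<b$; applying $\t{cd}$ next requires the same property with $b\in(c,d)$ and $(ut_{a,b})(b)=u(a)$, contradicting Proposition~\ref{prop:bruhatcover}. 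The case $b=c$ with $d-a>k+1$ and relation (B2) (where $\overline a=\overline c$) use the periodicity to force the sign condition $u(c)\le 0<u(d)$ to fail on the second operator. Relation (F) is the affine analogue of relation~(5) of~(\ref{eq:relschub}) and follows by the same principle: after two operators the third sign condition is violated.

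The nontrivial identities (C1), (D), (E1), (E2) require checking equality of affine permutations. For (C1) with $d=a+k+1$, the transpositions $t_{b,d}$ and $t_{b-k-1,a}$ are equal in $W$ (they permute the same pair of orbits mod $k+1$), so the identity is a renormalization and the cover conditions on the two sides coincide. For (D), the hypotheses $\overline b=\overline c$, $\overline a=\overline d$, and $(b-a)+(d-c)=k+1$ force $t_{a,b}t_{c,d}=t_{d-k-1,c}t_{b-k-1,a}$ as affine permutations; verifying this on the image of an arbitrary position and then matching the sign and length requirements on both factorizations yields the identity. The braid-like relations (E1) and (E2) are the local affine counterparts of relations~(1) and~(2) of~(\ref{eq:relschub}): since the locality hypothesis places all four positions in a single window, the analysis reduces to that of~\cite{bsottilemonoid} applied to the window values of $u$.

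The main obstacle is the bookkeeping of periodicity. Every operator $\t{ab}$ acts simultaneously on all pairs of positions congruent to $(a,b)$ modulo $k+1$, so when two operators share a residue (as in (B2), (C1), (D)) one must track how their ``virtual'' copies interact outside the apparent support --- precisely the situation illustrated by the bold and dotted lines in the diagram preceding relation (A). Once the correct affine shifts are identified at each step, each relation becomes a finite case check that follows directly from Proposition~\ref{prop:bruhatcover} and the sign condition in~(\ref{eq:0bruhatop}).
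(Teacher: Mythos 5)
Your proposal follows essentially the same route as the paper: a case-by-case verification of (A)--(F) using the cover criterion of Proposition~\ref{prop:bruhatcover}, the sign condition $u(a)\le 0<u(b)$ from~(\ref{eq:0bruhatop}), and the periodicity $u(i+k+1)=u(i)+k+1$, with the zero relations obtained by showing a cover or sign condition fails at a later step and the identities (C1), (D), (E1), (E2) obtained by matching the two factorizations. The only points where you diverge slightly are (E1)/(E2), where the paper gives a direct sign argument ($u(a)\le 0$, $u(b)\le 0<u(d)<u(c)$) rather than citing the non-affine analysis, and (C2), which you omit and which the paper treats by exhibiting an explicit example showing both orderings occur with no local move between them.
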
 

\proof \null \ \smallskip

\noindent
(A) This relation is clear as the corresponding affine transpositions commute $t_{ab}t_{cd}=t_{cd}t_{ab}$. So if the result is non-zero, it will be non-zero on both sides and equal.

\noindent
(B1) Let us first assume that $a<c<b<d$. We want to show that $u\t{ab}\t{cd}=0$ for all $u\in W^0$.
If $u\t{ab}=0$, then we are done. We thus assume that $w=u\t{ab}\ne 0$. In this case we must have that $u(a)\le 0 < u(b)$ and for all $a<i<b$ we
have $u(i)<u(a)$ or $u(i)>u(b)$. In particular, since $a<c<b$ then $u(c)<u(a)$ or $u(c)>u(b)$. If $w\t{cd}\ne 0$ then $w(c)=u(c)<0$ and thus $u(c)<u(a)$. But also, since $c<b<d$ and since $w(b)=u(a)$ (which is non positive) then $w(b)<w(c)$, or equivalently, $u(a)<u(c)$. This is a contradiction, hence $u\t{ab}\t{cd}=0$. A similar argument allows us to conclude that $u\t{cd}\t{ab}=0$ in this case.

If $b=c$ and $d-a>k+1$, then $a<d-k-1<b<a+k+1<d<b+k+1$. If $u\t{ab}\t{cd}\ne 0$, then we must have $u(a)\le 0<u(b)<u(b+k+1)$ and $0<u(d)$.
We look at the sign of $u(d-k-1)$. If $0<u(d-k-1)$, then since $u\t{ab}\ne 0$ and $a<d-k-1<b$, we must have $u(d-k-1)>u(b)$. This gives $u(d)>u(b+k+1)$.
When we perform $w=u\t{ab}$ we have $w(a+k+1)=u(b+k+1)$ and $w(c)=w(b)=u(a)$. Hence $w(c)\le 0 <w(a+k+1)<w(d)$ a contradiction to $w\t{cd}\ne 0$.
Now if $u(d-k-1)\le 0$, then we must have $u(d-k-1)<u(a)\le 0$. This gives  $0<u(d)<u(a+k+1)\le k+1$ and this is a contradiction to $u\in W^0$ since the entries $1,2,\ldots,k+1$ must appear from left to right. Here we have $u(a+k+1)$ appearing before $u(d)$. We must thus  have $u\t{ab}\t{cd}= 0$
The case $u\t{cd}\t{ab}=0$ is similar.

\noindent
(B2) If $c=a+m(k+1)$ and $b<d$, then ($a<b<c<d$ and $m>0$) or $a=c<b<d$. Assume that $w=u\t{ab}\ne 0$. We have $u(a)\le 0<u(b)=w(a)$.
But then $w(c)=w(a+m(k+1))=w(a)+m(k+1)>0$. This implies that $w\t{cd}=0$.
If $b=d$, then $a=c$ and clearly $\t{ab}\t{ab}=0$. The case when $d=b-m(k+1)$ and $c\leq a$ for $m\leq 0$ is analog.

\noindent
(C1) We have $b-k-1<a<b<a+k+1=d$. If $w=u\t{ab}\ne 0$, then $w(b-k-1)=u(a-k-1)=u(a)-k-1<0<u(b)=w(a)$. Since $wt_{b-k-1,a}=wt_{bd}$, we have that $w\t{bd}\ne 0$ implies $0\ne w\t{b-k-1,a}=w\t{bd}$. The reverse implication is similar.

\noindent
(C2) It suffices to see that for $u=\cdot\cdot|\bar{0}\,2\,4|\cdot\cdot$ we have $u\t{12}\t{23}\ne 0$. On the other hand, we can check that $u\t{12}\t{23} = u\t{\overline 1,\overline 0}\t{13}$ but this is not a local move. Also, it is easy to check that no other moves can be performed on $u$ to obtain $u\t{12}\t{23}$.

\noindent
(D) The conditions imply that $c=b+m(k+1)$ and $d=a+(m+1)(k+1)$ for some $m>0$. We have $b-k-1<a<b<d-k-1<c<d$.
Assume $w=u\t{ab}\ne 0$ so $u(a)\le 0<u(b)<u(b+m(k+1))<u(b+(m+1)(k+1))=w(d)$. For $w\t{cd}\ne 0$ as well we need $w(c)=u(d-k-1)\le 0$.
We also have $u(c)=u(b+m(k+1))>0$. Hence if $u\t{ab}\t{cd}\ne 0$, then $0\ne u\t{d-k-1,c}=w$. Moreover $w(b-k-1)=u(a-k-1)<0<u(b)=w(a)$ and so $0\ne w\t{b-k-1,a}=w\t{cd}$. The argument for the converse is similar.

\noindent
(E1)  Assume $u\t{bc}\t{cd}\t{ac}\ne 0$. Arguing as above we must have $u(a)\le 0$ and $u(b)\le 0<u(d)<u(c)$. We get that $0\ne u\t{bd}\t{ab}\t{bc}=u\t{bc}\t{cd}\t{ac}$.

\noindent
(E2)  The argument is similar to (E1).

\noindent
(F)  If $w=u\t{bc}\t{ab}\ne 0$, then $u(b)\le 0 < u(c)$. But $w(c)=u(b)\le 0$ which implies $w\t{bc}=0$.
The other relation holds in the same way.
\endproof

\begin{remark} \label{rem:addedrel}
\medskip
If we consider the permutation $u$ we can derive more relations of length 2. Let $r=(b-a)+(d-c)$:

\medskip
\noindent {\bf (X1)} $u\t{ab}\t{cd}=u\t{d,c+r}\t{b-r,a}$ \qquad if $r<k+1$, \ $\overline{d}=\overline{a}$, \ $u(c)\le 0$ and $u(d)\le 0$,

\smallskip
\noindent {\bf (X2)} $u\t{ab}\t{cd}=u\t{cd}\t{b-r,b}$ \qquad if $r<k+1$, \ $\overline{d}=\overline{a}$ and $u(d)> 0$,

\smallskip
\noindent {\bf (X3)} $u\t{ab}\t{cd}=u\t{d-r,d}\t{ab}$ \qquad if $r<k+1$, \ $\overline{b}=\overline{c}$ and $u(a+r)\le 0$,

\smallskip
\noindent {\bf (X4)} $u\t{ab}\t{cd}=u\t{d-r,c}\t{b,a+r}$ \qquad if $r<k+1$, \ $\overline{b}=\overline{c}$, \  $u(b)>0$  and $u(a+r)> 0$, 

\smallskip
\noindent {\bf (X5)} $u\t{ab}\t{cd}=u\t{cd}\t{a,b+c-d}$ \qquad if $\overline{b}=\overline{d}$, \  $b-a>d-c$ and $u(d-b+a)> 0$,

\smallskip
\noindent {\bf (X6)} $u\t{ab}\t{cd}=u\t{c,d-b+a}\t{a,b}$ \qquad if $\overline{b}=\overline{d}$, \  $b-a<d-c$ and $u(a)\le 0$. 

\medskip\noindent
In the (X) relations, the conditions we impose on $u$ are minimal to assure that both sides of the equality are non-zero. These conditions are not given by the definition of the operators $\t{ab}$.
For example in (X1), the left hand side is non-zero regardless of the value of $u(d)$ but to guarantee that the right hand side is non-zero, we must have $u(d)\le 0$.
This shows that as operators $\t{ab}\t{cd}\ne \t{d,c+r}\t{b-r,a}$. In the part (II) of our program we will need to study all of the (X) relations. If one considers an interval $[u,w]$ of rank 3 
and computes $K_{[u,w]}$, then by Proposition~\ref{prop:BS} the coefficient of $F_{21}$ and $F_{12}$ must be the same in $K_{[u,w]}$. This means that every time we have a descent
followed by an ascent in a chain, we must have another chain with an ascent followed by a descent. This should be reflected by relations like (X) and could depend on $u$.
The main work of~\cite{benedettib} is first to build a full set of relations of length 3 that pairs every ascent-descent type to a descent-ascent. This cannot be done independently from $u$.
The purpose of this will be to define Dual-Knuth operations on the maximal chains in intervals $[u,w]$ in order to construct dual graphs as in~\cite{A_JAMS}.

\end{remark}

\section{Schubert vs Schur Imbedded Inside Dual $k$-Schur}\label{sc:SSimbedding}

When comparing the relations~(\ref{eq:relschub}) and the ones given in Section~\ref{sec:rel0bruhat} we see that it may be possible to find a homomorphism from the Schubert vs Schur operators $\u{ab}$
to the Dual $k$-Schur operators $\t{a'b'}$. Such a homomorphism vanishes on many chains and this is the expected behavior. The main result of this section is that for any interval $[x,y]_r$ in the $r$-Bruhat order we can find a $k$ and a homomorphism such that every chain of $[x,y]_r$ maps to a non-zero chain in an interval $[u,v]$.

\begin{example} If we compare Example~\ref{ex:schubert} and Example~\ref{ex:0Bruhat}, the map $\u{ab}\mapsto\t{a-3,b-3}$ is a homomorphism that preserves all the chains from the first interval to the second one. This implies that, coefficient-wise, the quasisymmetric function $K_{[142635,356124]_3} $  is smaller than $K_{[u,w]}$. This fact is also implied by noticing that a transposition $\t{a,b}$ could be applied to several windows in a given affine grassmannian permutation $u$, which is not the case, in general, for permutations in the $r$-Bruhat order.
\end{example}

Now, given a non-empty interval $[x,y]_r$ in the $r$-Bruhat order, we want to find integers $k$, $s$ and an explicit interval $[u,v]$ in the strong $0$-Bruhat graph such that the homomorphism $\u{ab}\mapsto\t{a-s,b-s}$  maps the non-zero chains of $[x,y]_r$ to non-zero chains of $[u,v]$. In fact, we only need to assume that we have a non-zero operator $\u{a_nb_n}\cdots\u{a_1b_1}$ and obtain the other ones using the corresponding relations. Then, the interval $[x,y]_r$ is isomorphic to the one described in Section~\ref{sec:rBruhat}. 

\noindent For this purpose, let $\zeta=(a_n, b_n)\cdots(a_1,b_1)$, $up(\zeta)=\{i_1<i_2<\cdots<i_r\}$ and $up^c(\zeta)=\{j_1<j_2<\cdots\}$, then $r=|up(\zeta)|$. As in  Section~\ref{sec:rBruhat} we have that $[x,y]_r$ is nonempty for $y=[i_1,i_2,\ldots,i_r,j_1,j_2,\ldots]$ and $x=\zeta^{-1}y$. 

Let $k$ be such that $\alpha=x(\alpha)=y(\alpha)$ for all $\alpha>k+1$. Such a $k$ exists since $x$ and $y$  have finitely many non-fixed points. Put $x_{\alpha}=x(\alpha)$ and take the permutation $[x_1,x_2,\ldots,x_{k+1}]$. Now, 
we consider the positions $\alpha_1<\cdots<\alpha_\ell<r<\beta_1<\cdots<\beta_t<k+1$ for which there are descents before and after $r$. In other words, where $x_{\alpha_i}>x_{\alpha_{i+1}}$ and $x_{\beta_j}>x_{\beta_{j+1}}$ for $1\leq i\leq\ell -1$ and $1\leq j\leq t-1$. This defines segments
 $$ 1,2,\ldots,\alpha_1;\quad  \cdots\quad {\alpha}_\ell+1,\ldots,r;\quad r+1,\ldots,\beta_1;\quad\cdots\quad \beta_{t}+1,\ldots,k+1.$$
We want to construct a $0$-grassmannian in the $k+1$-affine permutation group $W$ with this information such that in some adjacent $k+1$ positions we have a permutation that has the same patterns as $x^{-1}$. The reason we want to look at the inverse permutation $x^{-1}$ is because the $\u{}$  operators act on the left whereas the $\t{}$ operators act on the right. 

\noindent For this purpose, we first place the values $1,2,\ldots,k+1$ on the $\mathbb Z$-axis as follows. 
$$ \begin{array}{rcl}
1,2,\ldots,k-\beta_t+1&\hbox{ in positions }& x_{\beta_t+1}-t(k\!+\!1),\ldots,x_{k+1}-t(k\!+\!1)\cr
&\cdots&\cr
k-\beta_1+2,\ldots,k-r +1&\hbox{ in positions }& x_{r+1},\ldots,x_{\beta_1}\cr
k-r+2,\ldots,k-\alpha_\ell+1&\hbox{ in positions }& x_{\alpha_\ell+1}+(k\!+\!1),\ldots,x_{r}+(k\!+\!1)\cr
&\cdots&\cr
k-\alpha_1+2,\ldots,k+1&\hbox{ in positions }& x_{1}+(\ell\!+\!1)(k\!+\!1),\ldots,x_{\alpha_1}+(\ell\!+\!1)(k\!+\!1)\cr
  \end{array} $$
  This construction places the values $1,2,\ldots,k+1$ on the $\mathbb Z$-axis from left to right in distinct positions modulo $k+1$.
  We build a permutation $u'$ of $\mathbb Z$ defining it with the relation $u'_{i+m(k+1)}=u'_i+m(k+1)$.
  This may not be a permutation in $W$ as the sum $u'_1+u'_2+\cdots +u'_{k+1}$ may not be ${k+2 \choose 2}$, but a simple shift gives us the desired result, as shown in the next lemma which will be followed by an example to make this construction clearer.
  
  \begin{lemma} Any permutation $u'$ of $\mathbb Z$ such that $u'_{i+m(k+1)}=u'_i+m(k+1)$ and  the values $1,2,\ldots,k+1$ are in distinct positions modulo $k+1$ satisfies
   $$u'_1+u'_2+\cdots +u'_{k+1} = {k+2 \choose 2} - s(k+1)$$
   for some integer $s$.
  \end{lemma}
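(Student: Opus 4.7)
The plan is to show that $\binom{k+2}{2} - \sum_{i=1}^{k+1} u'_i$ is divisible by $k+1$; setting $s$ equal to this ratio then gives the stated equality. The argument reduces to a single congruence computation modulo $k+1$, after first establishing that $u'_1, u'_2, \ldots, u'_{k+1}$ is a complete residue system mod $k+1$.

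First I would verify this residue claim. Suppose, toward a contradiction, that $u'_i \equiv u'_j \pmod{k+1}$ for some $1 \le i < j \le k+1$; then $u'_j - u'_i = m(k+1)$ for some nonzero integer $m$. The translation property gives
\[
u'_{i + m(k+1)} \;=\; u'_i + m(k+1) \;=\; u'_j,
\]
and since $u'$ is a bijection of $\mathbb Z$ this forces $i + m(k+1) = j$, contradicting $|j-i| \le k$. Hence $\{u'_1, \ldots, u'_{k+1}\}$ hits every residue class modulo $k+1$ exactly once, so
\[
\sum_{i=1}^{k+1} u'_i \;\equiv\; 0 + 1 + \cdots + k \;=\; \frac{k(k+1)}{2} \pmod{k+1}.
\]

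A direct expansion gives $\binom{k+2}{2} = \frac{(k+1)(k+2)}{2} = \frac{k(k+1)}{2} + (k+1)$, so $\binom{k+2}{2} \equiv \frac{k(k+1)}{2} \pmod{k+1}$ as well. Combining the two congruences, $\binom{k+2}{2} - \sum_{i=1}^{k+1} u'_i$ is a multiple of $k+1$, which yields $\sum u'_i = \binom{k+2}{2} - s(k+1)$ for the appropriate integer $s$.

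The only mild subtlety worth flagging is that the extra hypothesis that the values $1, 2, \ldots, k+1$ sit in distinct positions modulo $k+1$ is actually a consequence of the bijection and translation conditions (running the same residue argument for $(u')^{-1}$), so the proof never needs to invoke it explicitly. The substantive content of the lemma is just the complete-residue-system observation above together with the elementary identity $\binom{k+2}{2} = \binom{k+1}{2} + (k+1)$; there is no real obstacle.
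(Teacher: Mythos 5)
Your proof is correct, but it takes a genuinely different (and more self-contained) route than the paper's. The paper sets $w=(u')^{-1}$ and uses the stated hypothesis directly: the positions $w_1,\ldots,w_{k+1}$ of the values $1,\ldots,k+1$ are distinct modulo $k+1$, hence $w_1+\cdots+w_{k+1}=\binom{k+2}{2}+s(k+1)$, and it then transfers this to $u'$ ``by inverting the permutation'' --- implicitly invoking the identity $\sum_{i=1}^{k+1}u'_i+\sum_{i=1}^{k+1}w_i=(k+1)(k+2)$, valid for mutually inverse permutations with the translation property, a step the paper leaves entirely to the reader. You instead work with the window values $u'_1,\ldots,u'_{k+1}$ themselves: injectivity plus the translation property forces them to form a complete residue system modulo $k+1$ (your contradiction argument via $i+m(k+1)=j$ is sound), after which the conclusion is the one-line congruence $\binom{k+2}{2}\equiv\frac{k(k+1)}{2}\pmod{k+1}$. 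What your approach buys is the elimination of the inversion identity altogether, and it also makes explicit that the hypothesis about the positions of $1,\ldots,k+1$ is redundant for the statement as given (it follows by running your residue argument on $(u')^{-1}$, which inherits the translation property); the paper's hypothesis is really there to guarantee that the construction preceding the lemma yields a well-defined permutation, and its proof leans on that hypothesis rather than deriving it. Both arguments are valid; yours fills in the step the paper compresses.
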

  
  \begin{proof} Let $w^{-1}=u'$. Since $1,2,\ldots,k+1$ are in distinct positions in $u'$ modulo $k+1$ we 
  have that $w_1+w_2+\cdots+w_{k+1}=1+2+\cdots+(k+1)+s(k+1)$ for some $s\in\mathbb Z$.
  The result follows by inverting the permutation.
  \end{proof}
  
  Notice that each time we shift the values of $u'$ by $1$, like $v_i=u'_{i+1}$ we get that
    $$v_1+v_2+\cdots+v_{k+1}=u'_1+u'_2+\cdots u'_{k+1}+(k+1) = {k+2 \choose 2}+(1-s)(k+1).$$
Hence, if $u'$ is as above and if the entries $1,2,\dots, k+1$ appear from left to right in $u'$, then by defining the permutation $u$ by $u_i=u'_{i+s}$, we get a $0$-affine permutation in $W^0$. 

\begin{example}
Let us take the permutation from Example~\ref{ex:schubert}. 
Let $\zeta=[3, 6, 2, 5, 4, 1,...]$ where all other values are fixed.  We can choose $k+1=6$. We have that $up(\zeta)=\{3,5,6\}$ and $up^c(\zeta)=\{1,2,4,...\}$. In this case, $r=3$, $y=[3,5,6,1,2,4,...]$ and $x=[1,4,2,6,3,5,...]$. The descents in the permutation $x$ are in positions $\alpha = 2$ and $\beta = 4$ so that $\ell=t=1$ and $\alpha < r <\beta$.
With the procedure above, we get 
  $$ \begin{array}{l}
  1=u'(x_5-6)=u'(-3), \qquad 2=u'(x_6-6)=u'(-1);\\
  3=u'(x_4)=u'(6);\\
  4=u'(x_3+6)=u'(8);\\
  5=u'(x_1+12)=u'(13), \qquad 6=u'(x_2+12)=u'(16).
  \end{array}
  $$
 Once we determine the values in the positions above, all other values of $u'$ are determined as follows   
  $$u' = \cdots  | \overline{13} \, \bar{8} \, 1 \, \overline{12} \, 2 \, \bar{3} 
              \underbrace{ | \bar{7} \, \bar{2} \, 7  \, \bar{6} \, 8 \, 3  | }_{\text{main}}
              \bar{1} \, 4 \, 13 \, \bar{0} \, 14 \, 9 | 
              5 \, 10 \, 19 \, 6 \, 20 | \cdots
  $$
the sum of the entries in the main window of $u'$ is $3={7 \choose 2}-3(6)$, hence $s=3$. We see that the entries of $u'$ in the main window $[\bar{7} \, \bar{2} \, 7  \, \bar{6} \, 8 \, 3 ]$  are in the same relative order as $x^{-1}=[1 \, 3 \, 5 \, 2 \, 6 \, 4]$. We also see that the smallest $r=3$ entries of the main window of $u'$ are $\le 0$ and the remaining ones are positive. Now we get $u$ by shifting the positions of $u'$ by $s$:
  $$u = \cdots \overline{13} \, \bar{8} \,1 | \overline{12}\,2\,\bar{3}\,\bar{7}\,\bar{2}\,7
                              \underbrace{ | \bar{6}\,8\,3\,\bar{1}\,4\,13 | }_{\text{main}}\bar{0}\,14 \, 9 \, 
              5 \, 10 \, 19 | 6 \, 20 \cdots
  $$
\end{example}

We remark that by construction, the entries $[u_{1-s},u_{2-s},\ldots,u_{k+1-s}]$ are the same as 
$[u'_1,u'_2,\ldots,u'_{k+1}]$ which in turn are in the same relative order as in $x^{-1}$. Therefore, from the previous paragraph we see that the smallest $r$ entries in $[u_{1-s'},u_{2-s'},\ldots,u_{k+1-s'}]$ are $\leq 0$ and the other entries in that window are positive. This implies that if $x$ is covered by a non-zero permutation given by $\u{ab}x$ where $x^{-1}_{a}\le r <x^{-1}_{b}$,
then we have $u\t{a-s,b-s}$ is a cover in the $0$-Bruhat graph. 
Recursively, we get that

\begin{theorem} Let $[x,y]_r$ be a non-empty interval $[x,y]_r$ in the $r$-Bruhat order and let $u$ and $s$ be as above. For any maximal chain $\u{a_nb_n}\cdots\u{a_1b_1}$ in the interval $[x,y]_r$ 
we have that the chain $\t{a_1-s,b_1-s}\cdots \t{a_n-s,b_n-s}$ is a non-zero maximal chain in the $0$-affine Bruhat graph in $[u,u\t{a_1-s,b_1-s}\cdots \t{a_n-s,b_n-s}]$.
\end{theorem}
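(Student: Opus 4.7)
The plan is to prove the statement by induction on the chain length $n$, carrying a single invariant from step to step.

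Let $x_i := \u{a_ib_i}\cdots \u{a_1b_1}x$ and $u_i := u\cdot \t{a_1-s,b_1-s}\cdots \t{a_i-s,b_i-s}$, and set $W_s := \{1-s,2-s,\ldots,k+1-s\}$. The invariant I will maintain is that $u_i(1-s), u_i(2-s),\ldots, u_i(k+1-s)$ is order-isomorphic to $x_i^{-1}(1), x_i^{-1}(2),\ldots, x_i^{-1}(k+1)$, with $u_i(j-s)\le 0$ precisely when $x_i^{-1}(j)\le r$. The construction preceding the theorem statement establishes this invariant at $i=0$.

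For the base case $n=1$, assume $x\lessdot_r \u{ab}x$. Because $x$ and $y$ fix every integer exceeding $k+1$, every cover label satisfies $a,b\in\{1,\ldots,k+1\}$ and in particular $b-a\le k$. Hence $a-s,b-s\in W_s$ and every integer strictly between them also lies in $W_s$. The invariant at $i=0$ converts the sign requirement $x^{-1}(a)\le r<x^{-1}(b)$ into $u(a-s)\le 0<u(b-s)$, and converts the Bruhat cover condition `for every $a<c<b$, either $x^{-1}(c)<x^{-1}(a)$ or $x^{-1}(c)>x^{-1}(b)$' into its counterpart `for every $a-s<i<b-s$, either $u(i)<u(a-s)$ or $u(i)>u(b-s)$'. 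By Proposition~\ref{prop:bruhatcover} and the definition of $\t{a-s,b-s}$, the product $u\t{a-s,b-s}$ is nonzero and represents a cover $u\lessdot_0 u\t{a-s,b-s}$ in the $0$-affine Bruhat graph.

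For the inductive step, observe that $(a_{i+1},b_{i+1})$ swaps the values of $x_i^{-1}$ at positions $a_{i+1}$ and $b_{i+1}$, while $\t{a_{i+1}-s,b_{i+1}-s}$ swaps the values of $u_i$ at positions $a_{i+1}-s$ and $b_{i+1}-s$ together with their $(k+1)$-congruent copies. The congruent copies fall outside $W_s$, so the multiset of values of $u_{i+1}$ inside $W_s$ equals that of $u_i$ inside $W_s$, preserving the sign condition. A pair-by-pair check then confirms the order-isomorphism with $x_{i+1}^{-1}$, reestablishing the invariant. The base-case argument applied to $(x_i,u_i)$ now produces the next cover $u_i\lessdot_0 u_{i+1}$, closing the induction.

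The main obstacle is entirely bookkeeping: ensuring that every index appearing in the affine Bruhat cover criterion of Proposition~\ref{prop:bruhatcover} lies in the single window $W_s$, so that the check reduces to a purely finite comparison with $x_i^{-1}$ on $\{1,\ldots,k+1\}$. This is exactly where the choice of $k$ large enough for $x$ and $y$ to fix everything beyond $k+1$ intervenes: it forces every cover label to satisfy $b-a\le k$, which in turn confines the entire verification inside $W_s$ and avoids any genuinely new affine complication.
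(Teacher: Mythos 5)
Your proof is correct and takes essentially the same route as the paper: the paper's own argument is precisely the observation that the main window of $u$ is order-isomorphic to $x^{-1}$ with the sign pattern ($\le 0$ versus $>0$) matching the threshold $r$, that a single cover therefore transfers via Proposition~\ref{prop:bruhatcover}, and that this applies recursively --- you have simply made the invariant and the induction explicit. The one point you pass over quickly is why every \emph{intermediate} permutation $x_i$ (not just $x$ and $y$) fixes all integers exceeding $k+1$, so that all labels $a_i,b_i$ lie in $\{1,\ldots,k+1\}$; this follows from the monotonicity of the values at each fixed position along chains in the $r$-Bruhat order, and the paper is equally silent about it.
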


This theorem shows our main claim, namely the fact that the Schubert vs Schur problem is imbedded in the dual $k$-Schur problem. In the second part of our program~\cite{benedettib} we will construct dual Knuth operators on the intervals $[u,w]$. Under the morphism above, connected components of certain dual equivalent graphs obtained in~\cite{assafbsottile} are mapped to connected components of the dual equivalent graph of $[u,w]$. This shows in a stronger sense the imbedding above and explains the difficulty of the two problems. This allows us to conclude that solving the dual $k$-Schur problem is harder than the problem of Schubert vs Schur.


\begin{thebibliography}{9}

  

\bibitem[ABS]{aguiarbsottile}
M. Aguiar, N. Bergeron and F. Sottile,
\emph{Combinatorial Hopf Algebra and generalized Dehn-Sommerville relations\/},
Compositio Mathematica {\bf 142-1} (2006)  pp 1--30. 

\bibitem[Assaf]{A_JAMS}
S. Assaf, {\it combinatorial proof of LLT and Macdonald positivity}.
submitted (arXiv:1005.3759).

\bibitem[ABF]{assafbsottile}
S. Assaf, N. Bergeron and F. Sottile,
\emph{On a Positive Combinatorial Construction of  Schubert Coefficients: Schubert vs Grassmanian} [in preparation] (2012).

\bibitem[Beligan]{beligan}
M. Beligan {\it Insertion for Tableaux of Transpositions, A Generalization of SchenstedÕs Algorithm}, Ph. D. Thesis, York University (2007) 109p.

\bibitem[BB12]{benedettib}
C. Benedetti and N. Bergeron,
\emph{Schubert Polynomials, $k$-Schur functions and Gromov-Witten invariants. (part II)}  [in preparation] (2012).
%

\bibitem[BBTZ]{BBTZ}
C. Berg, N. Bergeron, H. Thomas and M. Zabrocki, 
\emph{Expansion of k-Schur functions for maximal k-rectangles within the affine nilCoxeter algebra}, arXiv:1107.3610.
%

\bibitem[BMSW]{bmws}
N. Bergeron, S. Mykytiuk, F. Sottile, and S. van Willigenburg,
\emph{Pieri Operations on Posets}, 
J. of Comb. Theory, Series A {bf 91} (2000) 84--110.

\bibitem[BS98]{bsottileschub}
N. Bergeron and {F. Sottile},
\emph{Schubert polynomials, the Bruhat order, and the geometry of flag manifolds},
 Duke Math. J.  {\bf 95-2} (1998)  373--423.

\bibitem[BS99Hopf]{bsottilehopf}
N. Bergeron and F. Sottile,
\emph{Hopf Algebra and Edge-Labelled POSETs},
J. of Alg.  {\bf 216} (1999) 641--651.

\bibitem[BS99Mono]{bsottilemonoid}
N. Bergeron and F. Sottile,
\emph{A monoid for the Grassmannian-Bruhat order},
Europ. J. Combinatorics {\bf 20} (1999) 197--211.

\bibitem[BS00]{bsottileskew}
N. Bergeron and F. Sottile,
\emph{Skew Schubert Functions and the Pieri Formula for the Flag Manifolds},
Trans. Amer. Math. Soc. {\bf 354} (2002) 651--673.

\bibitem[BB05]{bjornerbrenti}
A. Bjorner and F. Brenti,
\emph{Combinatorics of Coxeter groups}, Graduate Texts in Mathematics {\bf 231}, Springer, New
York, 2005. 
%


\bibitem[Buch]{Buch} A. Buch, to appear.

\bibitem[BMP]{BMP}
A. Buch, L. Mihalcea and N. Perrin, Personal communication.

\bibitem[KT]{knutsontao} A. Knutson and T. Tao,
\emph{The honeycomb model of $GL_n({\mathbb C})$ tensor products. I. Proof of the saturation conjecture},
J. Amer. Math. Soc. \textbf {12-4} (1999) 1055--1090. 

\bibitem[Lam]{Lam}
T. Lam, 
\emph{Schubert polynomials for the affine Grassmannian},
J. Amer. Math. Soc. {\bf 21-1} (2008) 259--281.

\bibitem[LLMS]{LLMS}
T. Lam, L. Lapointe, J. Morse and M. Shimozono, 
\emph{Affine insertion and Pieri rules for the affine Grassmannian},
Mem. Amer. Math. Soc. {\bf 208-977} (2010).

\bibitem[LLM]{LLM03}
L. Lapointe, A. Lascoux, and J. Morse, 
\emph{Tableau atoms and a new macdonald positivity conjecture}, Duke Math. J. {\bf 116}
(2003) 103--146.

\bibitem[LM05]{LM1} L.~Lapointe and J.~Morse, 
\emph{Tableaux on $k+1$-cores, reduced words for affine
permutations, and $k$-Schur expansions}, J. Combin. Theory Ser. A
\textbf{112-1} (2005)  44--81.


\bibitem[LM07]{lapointemorseDEF}
L. Lapointe and J. Morse, \emph{ A k-tableaux characterization of k-Schur functions}, Adv. Math.
{\bf 213-1} (2007) 183--204.

\bibitem[LM08]{lapointemorse}
L. Lapointe and J. Morse,
\emph{Quantum cohomology and the k-Schur basis},
Trans. Amer. Math. Soc. {\bf 360-4} (2008) 2021--2040. 

\bibitem[LS]{LS82a}
A.~Lascoux and M.-P. Sch{\"u}tzenberger, {\em Polyn{\^o}mes de
  {S}chubert}, C. R. Acad. Sci. Paris, {\bf 294} (1982) 447--450.

\bibitem[M91]{Macdonald91}
I.~G. Macdonald, {\em Notes on {S}chubert Polynomials}, Laboratoire de
  combinatoire et d'informatique math\'ematique {(LACIM)}, Universit\'e du
  Qu\'ebec \`a Montr\'eal, Montr\'eal, 1991.

\bibitem[M95]{macdonald} {I.G. Macdonald}, emph{Symmetric functions and Hall polynomials}, 2nd ed., Oxford University Press, 1995.

\bibitem[Post]{postnikov}
A. Postnikov, \emph{Affine approach to quantum Schubert calculus},
Duke Math. J. \textbf{128-3} (2005) 473--509. 

\bibitem[Stan]{stanley} R. Stanley, \emph{Positivity problems and conjectures in algebraic combinatorics},
Mathematics: frontiers and perspectives, Amer. Math. Soc., Providence, RI, (2000)  295--319.

   
  \end{thebibliography}
\end{document}